\documentclass[11pt]{article}
\usepackage{amsmath,amsfonts,amsthm,amssymb, mathtools, MnSymbol, enumitem}
\usepackage{mathrsfs, graphicx,color,latexsym, tikz, calc,booktabs}
\usepackage{caption}
\usepackage{makecell}
\usepackage{indentfirst}
\usepackage{amsmath}
\usepackage{setspace}
\usepackage{multirow,upgreek}
\usepackage{color}
\usetikzlibrary{shadows}
\usetikzlibrary{patterns,arrows,decorations.pathreplacing}
\textwidth 160mm \textheight240mm \oddsidemargin=-0cm
\evensidemargin=0cm \topmargin=-1.5cm

\newtheorem{thm}{Theorem}[section]
\newtheorem{lem}[thm]{Lemma}
\newtheorem{cor}[thm]{Corollary}
\newtheorem{re}[thm]{Remark}
\newtheorem{Def}[thm]{Definition}
\newtheorem{prop}[thm]{Proposition}

\newcommand{\field}[1]{\mathbb{#1}}

\newcommand{\N}{\field{N}}
\newtheorem{examplex}{Example}

\usepackage{etoolbox}
\let\bbordermatrix\bordermatrix
\patchcmd{\bbordermatrix}{8.75}{4.75}{}{}
\patchcmd{\bbordermatrix}{\left(}{\left(}{}{}
\patchcmd{\bbordermatrix}{\right)}{\right)}{}{}

\patchcmd{\bbordermatrix}{\begingroup}{\begingroup\openup1\jot}{}{}
\makeatletter
\patchcmd{\bbordermatrix}
  {\vcenter{\kern-\ht\@ne\unvbox\z@\kern-\baselineskip}}
  {\vcenter{\kern-\ht\@ne\unvbox\z@\kern-\baselineskip\kern2pt}}
  {}{}
\makeatother

\title{\bf Inertia and spectral symmetry of eccentricity matrices of some clique trees }
\author{
{\small  Xiaohong Li$^a$, \ \ Jianfeng Wang$^{a,}$\footnote{Corresponding author.
\newline{\it \hspace*{5mm}Email addresses:} x.h.li@aliyun.com(X.H. Li), jfwang@sdut.edu.cn (J.F.Wang), mbrunett@unina.it (M. Brunetti)}, \;\; Maurizio Brunetti$^b$}\\[2mm]
\footnotesize $^a$School of Mathematics and Statistics, Shandong University of Technology, Zibo 255049, China\\
\footnotesize $^b$ Department of Mathematics and Applications, University Federico II', Naples, Italy }
\date{ }
\date{}

\begin{document}
\maketitle
	
\setcounter{page}{1}

\begin{abstract}
The eccentricity matrix $\mathcal E(G)$ of a connected graph $G$ is obtained from the distance matrix of $G$ by leaving unchanged the largest nonzero entries in each row and each column, and replacing the remaining ones with zeros.  In this paper, we consider the set $\mathcal C  \mathcal T$ of clique trees whose blocks have at most two cut-vertices \textcolor{blue}{of the clique tree}. After proving the irreducibility of the eccentricity matrix of a clique tree in $\mathcal C \mathcal T$ and finding its inertia indices, we show that every graph in $\mathcal C  \mathcal T$  with more than $4$ vertices and odd diameter has two positive and two negative
$\mathcal E$-eigenvalues. Positive $\mathcal E$-eigenvalues and negative $\mathcal E$-eigenvalues turn out to be equal in number even for graphs in $\mathcal C \mathcal T$ with even diameter; that shared cardinality also counts the \textcolor{blue}{`diametrally distinguished'} vertices. Finally, we prove that the spectrum of the eccentricity matrix of a clique tree $G$ in $\mathcal C \mathcal T$ is symmetric with respect to the origin if and only if $G$ has an odd diameter and exactly two adjacent central vertices. \\

\noindent {\it AMS classification:} 05C50, 05C75\\[1mm]

\noindent {\it Keywords}: Clique tree; Eccentricity matrix; Inertia; Symmetry.
\end{abstract}
\section{Introduction}
\noindent
Let $G =(V_G, E_G)$  be a graph with order $n=|V_G|$ and size $m=|E_G|$. Each vertex adjacent to $v \in V_G$ is said to be a {\em neighbour of $v$}. The set of all the neighbours of $v$  is denoted by $N_G(v)$.  The vertex-degree $d_G(v)$ of $v$ gives the cardinality of $N_G(v)$.
The distance between two vertices $u$ and $v$, i.e.\ the minimum length of the paths joining them, is denoted by  $d_{G}(u,v)$ (the subscript  will be omitted when the context makes it clear).
 Let $D(G)=(d_{uv})$ be the distance matrix of $G$, where $d_{uv}=d_{G}(u, v)$. The eccentricity $e_G(u)$ of the vertex $u\in V_G$ in $G$ is given by $e_G(u)=\max\{d_G(u, v) \mid v\in V_G\}$. By definition, the radius and the diameter of $G$ are respectively the number
${\rm rad}(G):=\min\{e_G(u) \mid u\in V_G\}$ and ${\rm diam}(G):=\max\{e_G(u) \mid u\in V_G\}$. A {\em diametral path} is a shortest path between any pair of vertices $u$ and $v$ such that
$d_G(u,v) = {\rm diam}(G)$. The center $C(G)$ of a graph $G$ consists of those vertices whose eccentricity is equal to the radius. Each element in $C(G)$ is said to be a {\em central vertex} or simply a {\em center} of $G$.

The matrix $\mathcal E(G) = (\epsilon_{uv})$, where
\[{\bf \epsilon}_{uv} = \begin{cases} \, d_{G}(u,v)  \quad \text{if $d_{G}(u,v)=min{\{e_G(u),e_G(v)}\}$,}\\
  					       \, 0 \ \ \ \ \ \ \  \ \ \  \quad \text{otherwise,}
\end{cases}
\]
is known as the {\em eccentricity matrix}  of $G$ (see for instance \cite{MK, JM, WB}). Some authors referred to it as the $D_{MAX}$-matrix \cite{MR,MRA}.
 The matrix $\mathcal E(G)$ can be obtained from the distance matrix $D(G)$ by retaining the largest distances in each row and each column and replacing the remaining entries with zeros.
The eccentricity  matrix is symmetric; therefore, the $\mathcal E$-eigenvalues, i.e.\ the eigenvalues of $\mathcal E(G)$, are all real.

 The $\mathcal E$-spectrum of $G$ can be written as $${\rm Spec}_\mathcal E(G) =\{\xi_{1}^{(m_1)},\xi_{2}^{(m_2)},\dots,\xi_{k}^{(m_k)}\},$$ where
$\xi_{1} > \xi_{2} >\cdots > \xi_{k}$ are  the distinct $\mathcal E$-eigenvalues of $G$ and $m_{i}$ is the algebraic multiplicity of $\xi_{i}$ for $1 \leqslant i \leqslant k$. By the Perron-Frobenius Theorem, the $\mathcal E$-spectral radius  $\rho(\mathcal E(G))$, defined as the largest modulus  of the $\mathcal E$-eigenvalues, is attained by the largest $\mathcal E$-eigenvalue.

After ten years of investigation, the \textcolor{blue}{article} on the eccentricity matrix, its spectral properties and some related chemical indices is quite rich.  \textcolor{blue}{The authors} proved that the eccentricity matrices  of trees are irreducible  {\rm \cite{JM}}. Many nonisomorphic $\mathcal E$-cospectral pairs have been found in \cite{WB},  where the reader can also find a characterization of graphs having exactly two distinct $\mathcal E$-eigenvalues. Mahato et al.\ computed the $\mathcal E$-spectra of many graphs in \cite{IM}. Wei et al. {\rm \cite{WW}} determined the $n$-vertex trees with minimum $\mathcal E$-spectral radius, also identifying  which graphs have the minimal $\mathcal E$-spectral radius among  trees with given order and diameter. Qiu et al. {\rm \cite{ZZ}} studied the $\mathcal E$-spectrum of threshold graphs.  $\mathcal E$-energy has been studied in \cite{LX, JG, AKP}. In particular, in  \cite{LX} Lei et al.\ characterized the graphs whose second least $\mathcal E$-eigenvalue is larger than $-\sqrt{15-\sqrt{193}}$. Applications of the eccentricity matrix to molecular descriptors and Chemical graph theory can be found in {\rm \cite{JX, JM}}.

In the last few years, there has been a growing interest in the distance spectral properties of clique trees (the definition is recalled in Section 2).  Extremal problems concerning the distance energy of clique trees
have been solved in \cite{JGZ,LLL}. Moreover, Zhang et al. {\rm \cite{XJ}} determined the graphs with maximum and minimum distance Laplacian spectral radius among all clique trees with $n$ vertices and $k$ cliques.

 In the wake of \cite{JGZ,LLL,XJ}, we study in this paper some additional $\mathcal E$-spectral properties of a large family of clique trees including all ordinary trees. More precisely, we consider the class $\mathcal C \mathcal T$ of clique trees whose blocks have at most two cut-vertices \textcolor{blue}{of the clique tree}, compute the $\mathcal E$-inertia of all clique trees in $\mathcal C \mathcal T$  (Theorems \ref{3.5} and \ref{3.6}) and find the structural conditions ensuring $\mathcal E$-spectral symmetry (see Theorems \ref{4.1} and \ref{4.5}). Recall that the inertia ${\rm In}(M)$ of a symmetric matrix $M$ is the triple $(n_{+}(M), n_{-}(M), n_{0}(M))$, where $n_{+}(M)$, $n_{-}(M)$ and $n_{0}(M)$ denote the number of positive, negative and zero eigenvalues of $M$, respectively. One of the reasons why we focus on clique trees in $\mathcal C \mathcal T$ is the following structural property: every clique tree $G$ in $ \mathcal C \mathcal T$ surely contains an induced tree $T_G$ such that (i) ${\rm diam}(G) = {\rm diam}(T_G)$, (ii) the centers of $T_G$ are also centers of $G$, and (iii) the edges in $E_{T_G}$ are in one-to-one correspondence with the blocks of $G$ (see Section 2).

For our arguments, we borrow some ideas from \cite{IM}, where the $\mathcal E$-inertias of paths and lollipop graphs are computed,  and from the papers \cite{MK, AKP},  containing results on the $\mathcal E$-inertia of trees and of coalescences of graphs respectively. We explicitly note that the coalescences of complete graphs considered in \cite{AKP} are all in $\mathcal C\mathcal T$.

\section{Preliminaries and basic tools}
We start by recalling one of the most basic result of spectral matrix theory.
\begin{thm}\label{2.2}{\rm \cite[p.~53]{RHC}} Let $A$ be an $n\times n$ real matrix and let $E_{k}(A)$ denote the sum of its principal minors of size $k$. Then, the characteristic polynomial of $A$ is given by
\[ \varphi(\lambda)=\lambda^{n}-E_{1}(A)\lambda^{n-1} + \cdots + (-1)^{n-1}E_{n-1}(A)\lambda + (-1)^{n}E_{n}(A).
\]
\end{thm}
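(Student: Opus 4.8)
The plan is to compute $\varphi(\lambda)=\det(\lambda I_n-A)$ directly, exploiting the multilinearity of the determinant in the columns. Write the $j$-th column of $\lambda I_n-A$ as $\lambda\mathbf{e}_j-\mathbf{a}_j$, where $\mathbf{e}_j$ is the $j$-th canonical basis vector and $\mathbf{a}_j$ is the $j$-th column of $A$. Expanding the determinant as a multilinear function of these $n$ columns produces a sum of $2^n$ terms indexed by the subsets $S\subseteq\{1,\dots,n\}$: in the term associated with $S$ we pick the summand $-\mathbf{a}_j$ in every column $j\in S$ and the summand $\lambda\mathbf{e}_j$ in every column $j\notin S$. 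Factoring the scalars out columnwise, this term equals $(-1)^{|S|}\lambda^{\,n-|S|}\det\!\big(B_S\big)$, where $B_S$ is the matrix whose $j$-th column is $\mathbf{a}_j$ if $j\in S$ and $\mathbf{e}_j$ otherwise.

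The second step is to identify $\det(B_S)$ with the principal minor $\det\!\big(A[S]\big)$ of $A$ on the rows and columns indexed by $S$. This follows from the Leibniz formula: a permutation $\sigma$ contributes to $\det(B_S)$ only when it fixes every index outside $S$ (the unit column $\mathbf{e}_j$ forces $\sigma(j)=j$ for $j\notin S$), so $\sigma$ restricts to a permutation of $S$ with the same sign, and its contribution is $\prod_{j\in S}a_{\sigma(j)j}$; summing over all $\sigma\in\mathrm{Sym}(S)$ returns exactly $\det(A[S])$. Hence
\[
\varphi(\lambda)=\sum_{S\subseteq\{1,\dots,n\}}(-1)^{|S|}\lambda^{\,n-|S|}\det\!\big(A[S]\big)=\sum_{k=0}^{n}(-1)^{k}\Big(\sum_{|S|=k}\det\!\big(A[S]\big)\Big)\lambda^{\,n-k},
\]
and grouping subsets by cardinality together with the definition $E_k(A)=\sum_{|S|=k}\det(A[S])$ (with the convention $E_0(A)=1$, consistent with the leading term $\lambda^n$) yields precisely the claimed expansion.

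I expect the only delicate point to be the sign bookkeeping in the two reductions: first, that distributing multilinearity over the $n$ columns attaches the scalar $(-1)^{|S|}\lambda^{\,n-|S|}$ to $B_S$ with no stray sign, which is immediate since the scalars factor out one column at a time; and second, that ``deleting'' the unit columns $\mathbf{e}_j$, $j\notin S$, collapses $\det(B_S)$ to $\det(A[S])$ without introducing a sign. The latter is where one must argue carefully, either through the permutation argument above or by iterated Laplace expansion along the unit columns, noting that expanding along column $j$ with its single $1$ in row $j$ contributes cofactor sign $+1$ once the simultaneous removal of row $j$ and column $j$ is taken into account. An essentially equivalent alternative is to triangularize $A$ over the algebraic closure and read the coefficients off $\prod_i(\lambda-\lambda_i)$ as elementary symmetric functions of the eigenvalues, then invoke $e_k(\lambda_1,\dots,\lambda_n)=E_k(A)$; but the multilinear expansion is self-contained and avoids relying on that identity.
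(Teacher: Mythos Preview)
Your proof is correct and complete; the multilinear expansion of $\det(\lambda I_n-A)$ over the columns, followed by the identification $\det(B_S)=\det(A[S])$ via the Leibniz formula, is the standard textbook argument. Note, however, that the paper does not supply its own proof of this statement: it is quoted as a classical result from Horn and Johnson's \emph{Matrix Analysis} and used as a tool, so there is no proof in the paper to compare against.
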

The next well-known result is the celebrated Cauchy's Interlacing Theorem specialized to the context of symmetric matrices.
\begin{thm}\label{2.1}{\rm \cite[Corollary 2.5.2]{BH}} Let $A$ be an $n\times n$ symmetric real matrix with  eigenvalues $\lambda_{1}\geqslant \dots \geqslant \lambda_{n}$, and let $B$ be an $m \times m$ principal submatrix of $A$ having eigenvalues $\mu_{1}\geqslant \dots \geqslant\mu_{m}$. Then, the eigenvalues of $B$ interlace those of $A$, i.e.
\[ \lambda_{i}\geqslant \mu_{i}\geqslant \lambda_{n-m+i}, \qquad \text{for $1 \leqslant i \leqslant m$.}
\]
\end{thm}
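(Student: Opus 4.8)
The plan is to obtain both chains of inequalities from the Courant--Fischer min--max characterisation of the eigenvalues of a real symmetric matrix; although this variational description is not among the results recalled above, it is as standard as Theorem~\ref{2.2}, and it is precisely the tool that turns interlacing into a one-line set-inclusion argument. Recall that it expresses each eigenvalue simultaneously as a \emph{max--min} and as a \emph{min--max} of Rayleigh quotients over subspaces of prescribed dimension; the point is that replacing $A$ by a principal submatrix merely restricts the subspaces over which one optimises to lie inside a fixed coordinate subspace.

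First I would reduce to the case where $B$ is the \emph{leading} $m\times m$ principal submatrix of $A$. A general $m\times m$ principal submatrix is indexed by an $m$-subset of $\{1,\dots,n\}$; conjugating $A$ by the permutation matrix that sends those indices to $\{1,\dots,m\}$ produces a matrix with the same spectrum as $A$ and with $B$ as its leading block, so nothing is lost. Put $W=\{x\in\R^{n}:x_{m+1}=\cdots=x_{n}=0\}$ and identify $W$ with $\R^{m}$ through the first $m$ coordinates. The elementary but crucial remark is that if $x\in W$ has first-$m$-coordinate vector $y$, then $x^{\top}Ax=y^{\top}By$ and $x^{\top}x=y^{\top}y$, so the Rayleigh quotient of $B$ at $y$ coincides with that of $A$ at $x$.

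Combining this remark with Courant--Fischer applied to $B$ gives, for $1\le i\le m$,
\[\mu_i=\max_{\dim S=i,\ S\subseteq W}\ \min_{0\ne x\in S}\frac{x^{\top}Ax}{x^{\top}x}=\min_{\dim S=m-i+1,\ S\subseteq W}\ \max_{0\ne x\in S}\frac{x^{\top}Ax}{x^{\top}x},\]
while Courant--Fischer applied to $A$ gives
\[\lambda_i=\max_{\dim S=i,\ S\subseteq\R^{n}}\ \min_{0\ne x\in S}\frac{x^{\top}Ax}{x^{\top}x},\qquad\lambda_{n-m+i}=\min_{\dim S=m-i+1,\ S\subseteq\R^{n}}\ \max_{0\ne x\in S}\frac{x^{\top}Ax}{x^{\top}x},\]
the second identity because $n-(n-m+i)+1=m-i+1$. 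Since every $i$-dimensional subspace of $W$ is an $i$-dimensional subspace of $\R^{n}$, the maximum defining $\lambda_i$ is taken over a family that contains the one defining $\mu_i$, whence $\mu_i\le\lambda_i$; dually, the minimum defining $\lambda_{n-m+i}$ ranges over a family of $(m-i+1)$-dimensional subspaces containing the one defining $\mu_i$, whence $\mu_i\ge\lambda_{n-m+i}$. Together these give $\lambda_i\ge\mu_i\ge\lambda_{n-m+i}$, as claimed.

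I expect the only delicate point to be the index bookkeeping: one must line up the two forms of the min--max theorem so that the relevant subspace dimensions agree ($i$ and $m-i+1$ on the $B$-side, $i$ and $n-m+i$ on the $A$-side) and verify that the inclusion of subspace families pushes a $\max\min$ and a $\min\max$ in opposite directions, as needed. A more incremental alternative is to first establish the single-deletion case $m=n-1$ --- where the statement reads $\lambda_k\ge\mu_k\ge\lambda_{k+1}$ and follows from a dimension count on the intersection of a spanning eigenspace of $A$ with the hyperplane $W$ --- and then iterate it $n-m$ times, each step shifting the index in the lower bound by one; this route is conceptually simple but more cumbersome to write out, so I would present the Courant--Fischer argument as the main proof.
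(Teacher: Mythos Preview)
Your argument via the Courant--Fischer min--max principle is correct and is the standard route to Cauchy interlacing; the index bookkeeping you flag (matching the $(m-i+1)$-dimensional subspaces on the $B$-side with the $\lambda_{n-m+i}$ formula on the $A$-side) is handled correctly. Note, however, that the paper does not supply its own proof of this theorem: it is quoted as a preliminary tool from \cite[Corollary~2.5.2]{BH} and left unproved, so there is nothing in the paper to compare your approach against. Your write-up is a faithful rendering of the classical proof found in that reference and elsewhere.
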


According to \cite[Definitions~6.2.21 and~6.2.22]{RHC}, a matrix $M$ is said to be
irreducible if it is not permutationally similar to an upper triangular block matrix. In other words $M$ is reducible if there
exists a permutation matrix $P$ such that $M= P^\top \begin{pmatrix} A & B \\ O & C\end{pmatrix} P$,
where $A$ and $C$ are both square blocks with at least one row. The following result has been already mentioned in Section 1.
\begin{thm}\label{2.3}{\rm \cite[Theorem2.1]{JM}} The eccentricity matrix $\mathcal E(T)$ of a tree is irreducible.
\end{thm}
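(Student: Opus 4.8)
The plan is to recast irreducibility as a connectivity statement for an auxiliary graph built from $\mathcal E(T)$, and then to route every vertex to a fixed diametral path. I would start from the standard fact that a symmetric real matrix $M=(m_{uv})$ is irreducible if and only if the graph $\Gamma(M)$ on its index set, in which $u$ and $v$ are adjacent exactly when $m_{uv}\neq 0$, is connected. Taking $M=\mathcal E(T)$, it thus suffices to show that the graph $\Gamma$ with vertex set $V_T$ and $u\sim v$ whenever $\epsilon_{uv}\neq 0$ is connected; by the definition of $\mathcal E(T)$ this means showing that the relation $d_T(u,v)=\min\{e_T(u),e_T(v)\}$ spans a connected graph on $V_T$. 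The case $|V_T|=1$ is vacuous, so I would assume $|V_T|\geq 2$, whence $e_T(v)\geq 1$ for every $v\in V_T$ and $p_0\neq p_D$ below.

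The one substantial ingredient is the following classical lemma, which I would prove en route: if $P\colon p_0p_1\cdots p_D$ is a diametral path of $T$, then $e_T(v)=\max\{d_T(v,p_0),d_T(v,p_D)\}$ for every $v\in V_T$. The inequality ``$\geq$'' is obvious. For ``$\leq$'', pick $w$ with $d_T(v,w)=e_T(v)$ and let $q,r$ be the (unique) vertices of $P$ nearest to $v$ and to $w$, arranged so that $d_T(q,p_0)\leq d_T(q,p_D)$. Writing $d_T(p_0,w)=d_T(p_0,r)+d_T(r,w)\leq D=d_T(p_0,r)+d_T(r,p_D)$ and the symmetric inequality from $p_D$ gives $d_T(r,w)\leq\min\{d_T(r,p_0),d_T(r,p_D)\}$. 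Then $d_T(v,w)\leq d_T(v,q)+d_T(q,r)+d_T(r,w)$, and a one-line check in the two possible positions of $r$ relative to $q$ along $P$ (either $d_T(q,r)+d_T(r,p_D)=d_T(q,p_D)$, or $d_T(q,r)+d_T(r,p_0)=d_T(q,p_0)\leq d_T(q,p_D)$) yields $d_T(q,r)+d_T(r,w)\leq d_T(q,p_D)$, hence $d_T(v,w)\leq d_T(v,q)+d_T(q,p_D)=d_T(v,p_D)$.

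Granting the lemma, the theorem is immediate. Since $p_0,p_D$ are endpoints of a diametral path we have $e_T(p_0)=e_T(p_D)=D$, so $\epsilon_{p_0p_D}=d_T(p_0,p_D)=D=\min\{e_T(p_0),e_T(p_D)\}\neq 0$ and $p_0\sim p_D$ in $\Gamma$. For an arbitrary $v\in V_T\setminus\{p_0,p_D\}$, the lemma gives $e_T(v)=d_T(v,p_0)$ or $e_T(v)=d_T(v,p_D)$; in the first case $d_T(v,p_0)=e_T(v)\leq D=e_T(p_0)$ forces $\min\{e_T(v),e_T(p_0)\}=d_T(v,p_0)$, so $\epsilon_{vp_0}=d_T(v,p_0)\neq 0$ and $v\sim p_0$, and symmetrically in the second case $v\sim p_D$. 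Thus every vertex of $\Gamma$ is adjacent to one of the endpoints of the connected pair $\{p_0,p_D\}$, so $\Gamma$ is connected and $\mathcal E(T)$ is irreducible.

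The only place where care is needed is the classical lemma, and within it the little case analysis on where the projection $r$ of the farthest vertex $w$ sits along the diametral path; everything downstream is just bookkeeping with the elementary bound $\min\{e_T(u),e_T(v)\}\geq d_T(u,v)$ together with $e_T(p_0)=e_T(p_D)=\operatorname{diam}(T)$. I would also record, looking ahead, that this scheme — reduce to connectivity of $\Gamma$, then connect every vertex to a diametral path through one of its endpoints — is exactly the template to use for the analogous irreducibility result for the clique trees in $\mathcal{CT}$, with the tree metric replaced by the metric of the clique tree (which is still bounded by the diameter and still admits well-behaved projections onto a diametral path), the extra care there being caused only by distances equal to $1$ inside a block.
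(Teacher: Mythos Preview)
Your proof is correct. The eccentricity lemma (that $e_T(v)=\max\{d_T(v,p_0),d_T(v,p_D)\}$ for the endpoints $p_0,p_D$ of a diametral path) is well known, and your case analysis on the position of $r$ relative to $q$ along $P$ is clean and does exactly what is needed; the conclusion that every vertex of $\Gamma(\mathcal E(T))$ is $\Gamma$-adjacent to $p_0$ or to $p_D$, together with $p_0\sim p_D$, then gives connectivity and hence irreducibility.

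There is, however, nothing to compare against: the paper does not prove Theorem~\ref{2.3} itself but simply quotes it from \cite[Theorem~2.1]{JM}. Your forward-looking remark is apt: the paper's own contribution on irreducibility is Proposition~\ref{2.5}, where the scheme is indeed to pass to the associated tree $T_G$, invoke the cited tree result to get $\Gamma_{T_G}$ connected, and then hook every vertex of $G$ to some vertex of $T_G$ via Lemma~\ref{luno}. Your argument provides the missing proof of the cited input and dovetails with the paper's strategy; if anything, it is slightly sharper than what the paper needs, since you connect every vertex directly to one of two fixed hubs rather than merely asserting connectivity.
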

It is worthy to recall that there are many connected graphs whose eccentricity matrix is reducible. This happens, for instance, when the graph is complete bipartite of type $K_{a,b}$ with $a \geqslant b \geqslant 2$ \cite{JMF}.

For each symmetric matrix $A=(a_{ij})$, we consider the {\rm indicator matrix} $M(A)=(\mu_{ij})$, where $\mu_{ij}=1$ if $a_{ij}\neq 0$, and    $\mu_{ij}=0$ if $a_{ij}= 0$, and denote by $\Gamma(A)$  \textcolor{blue}{the graph} having $M(A)$ as adjacency matrix. The following theorem can be deduced from \cite[Theorem~6.2.24]{RHC}.
\begin{thm}\label{2.4} Let $A=(a_{ij})$ be the nonnegative symmetric matrix with order $n >1$. Then, $A$ is irreducible if and only if the graph $\Gamma (A)$ is  connected.
\end{thm}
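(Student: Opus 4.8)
The plan is to reduce the statement to the directed‑graph characterization of irreducibility recorded in \cite[Theorem~6.2.24]{RHC}: an $n\times n$ matrix $A=(a_{ij})$ is irreducible if and only if the \emph{directed} graph $\vec\Gamma(A)$ — with vertex set $\{1,\dots,n\}$ and an arc from $i$ to $j$ precisely when $a_{ij}\neq 0$ — is strongly connected. With that in hand, the only thing to check is that, for a symmetric $A$, strong connectivity of $\vec\Gamma(A)$ is equivalent to connectivity of the undirected graph $\Gamma(A)$ attached to the indicator matrix $M(A)$.

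First I would invoke the symmetry of $A$: since $a_{ij}\neq 0 \iff a_{ji}\neq 0$, the arc set of $\vec\Gamma(A)$ is symmetric, i.e.\ $(i,j)$ is an arc exactly when $(j,i)$ is. Hence every directed walk can be reversed, so $\vec\Gamma(A)$ is strongly connected if and only if any two of its vertices are joined by some directed walk, i.e.\ if and only if the underlying undirected multigraph is connected; after discarding loops and multiplicities, that underlying graph is by definition the graph $\Gamma(A)$ whose adjacency matrix is $M(A)$. (The hypothesis $n>1$ makes ``connected'' meaningful and is exactly what is needed for reducibility to be possible, since a reducible matrix must split into two square blocks each with at least one row.) Chaining these equivalences with \cite[Theorem~6.2.24]{RHC} gives both implications simultaneously.

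If a self‑contained argument is preferred, it is short. For the contrapositive of the ``if'' part, a reducible $A$ is permutationally similar to $\begin{pmatrix} B & C\\ O & D\end{pmatrix}$ with $B$ and $D$ square of order at least one; reading off the zero block and then using $a_{ij}=a_{ji}$ to annihilate the symmetric block produces a partition of the vertex set $\{1,\dots,n\}$ of $\Gamma(A)$ into two disjoint nonempty sets with no edge between them, so $\Gamma(A)$ is disconnected. Conversely, if $\Gamma(A)$ is disconnected, choose a proper nonempty union $S$ of connected components; then $a_{ij}=0$ whenever exactly one of $i,j$ lies in $S$, and moving the indices of $S$ to the front exhibits $A$ as permutationally similar to $A[S]\oplus A[\overline S]$, which is in particular block upper triangular with two square diagonal blocks, so $A$ is reducible.

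I do not expect a genuine obstacle: the statement is essentially a translation between linear‑algebraic and combinatorial language. The one point worth a sentence of care is the diagonal — $M(A)$ can carry $1$'s on its main diagonal when $A$ has nonzero diagonal entries, so $\Gamma(A)$ should be understood as a graph on $\{1,\dots,n\}$ possibly with loops; since loops affect neither the connectivity of $\Gamma(A)$ nor the block‑triangular form that defines reducibility, they can be ignored throughout, and saying so explicitly keeps the equivalence clean.
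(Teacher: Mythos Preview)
Your proposal is correct and follows exactly the route the paper indicates: the paper does not give its own proof but simply states that the result ``can be deduced from \cite[Theorem~6.2.24]{RHC}'', and your argument spells out precisely that deduction (plus an optional self-contained version). Your remark about loops on the diagonal is a reasonable aside, though the paper does not bother with it.
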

We now recall the definition of a clique tree as given in \cite{JGZ,LLL}. We warn the reader that the same expression also appears in literature with a different meaning  (like in \cite{JB, KM}).

 Let $G$ be a connected graph. A subset $X \subset V_G$ is said to be a cut-vertex set for $G$ if $G\setminus X$ is not connected. If a cut-vertex set $X$ is a singleton, its only element is said to be a {\em cut-vertex} for $G$. A block $B$ of $G$ is a maximal connected subgraph of $G$ without cut-vertices (for $B$).
As noted, for instance, in \cite{D}, the intersection of two blocks of $G$, if nonempty, consists of a single cut-vertex of $G$.

A clique tree is by definition a connected graph whose blocks are all cliques, i.e.\ complete graphs.
Obviously, every tree is a clique tree.  On the contrary, a clique tree is acyclic if and only if each of its blocks contains at most two vertices.  Clique trees are also known as block graphs, and there exists several structural characterizations of them (see \cite{Band, How}). For instance, it can be proved that $G$ is a clique tree if and only if is a diamond-free chordal graph (see \cite[Proposition~2.1]{Band}), i.e.\ no induced subgraph of $G$ is a cycle with more than $3$ vertices or a complete graph $K_4$ minus one edge.

In Section 1 we have denoted by $\mathcal C \mathcal T$ the class of clique trees whose blocks have at most two cut-vertices \textcolor{blue}{of the clique tree}. All clique trees with diameter $1$ or $2$ are in $\mathcal C \mathcal T$. The graph obtained by attaching a pendant edge to each vertex of a triangle represents the clique tree not belonging to $\mathcal C \mathcal T$ which is minimal with respect to both order and diameter.

Let now $G$ be a clique  \textcolor{blue}{tree} in $\mathcal C \mathcal T$. We say that a block $B$ of $G$ is a {\em leaf-block} (resp. {\em bridge-block}) if $B$ contains only one cut-vertex (resp. two cut-vertices) of $G$. If $G$ is a complete graph, we consider its only block, i.e.\ $G$ itself, as a leaf-block.
The subset of $\mathcal C \mathcal T$ containing clique trees whose diameter is at least $2$ is denoted by $\mathcal C \mathcal T^{\geqslant 2}$. Clearly,  $\mathcal C \mathcal T^{\geqslant 2}$ is obtained from $\mathcal C \mathcal T$ by removing the complete graphs.
\begin{lem}\label{lzero} Every clique tree in $\mathcal C \mathcal T^{\geqslant 2}$ has at least two leaf-blocks.
\end{lem}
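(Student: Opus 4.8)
The plan is to read off the conclusion from the block--cut-vertex tree of $G$. Since $G\in\mathcal{C}\mathcal{T}^{\geqslant 2}$ has diameter at least $2$, it is not a complete graph; being a clique tree with more than one block, it therefore possesses at least one cut-vertex, and consequently at least two blocks.

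First I would introduce the \emph{block--cut-vertex tree} $\mathcal{T}=\mathcal{T}(G)$: its vertex set is the disjoint union of the set of blocks of $G$ with the set of cut-vertices of $G$, and a block $B$ is declared adjacent to a cut-vertex $v$ exactly when $v\in V_B$. Using the fact recalled just before the statement, that two distinct blocks of $G$ intersect in at most one cut-vertex, one checks in the standard way that $\mathcal{T}$ is connected and acyclic, i.e.\ a tree; moreover, since $G$ has at least two blocks and at least one cut-vertex, $\mathcal{T}$ has at least three vertices.

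Next I would note that no leaf of $\mathcal{T}$ is a cut-vertex: by definition a cut-vertex of $G$ lies in at least two blocks, hence has degree at least $2$ in $\mathcal{T}$. Since a tree on at least two vertices has at least two leaves, $\mathcal{T}$ has at least two leaves, and each of them is a block $B$ of $G$ adjacent in $\mathcal{T}$ to exactly one vertex, that is, a block containing exactly one cut-vertex of $G$. By definition such a block is a leaf-block, so $G$ has at least two leaf-blocks.

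I do not expect a genuine obstacle here; the lemma is essentially the elementary statement that a tree on more than one vertex has at least two leaves. The only points requiring a little care are the reduction to the non-complete case (so that cut-vertices, and hence $\mathcal{T}$, really exist) and the observation that leaves of $\mathcal{T}$ are always blocks, which is precisely what identifies a leaf of $\mathcal{T}$ with a leaf-block in the sense of this paper. One may also remark in passing that the defining hypothesis of $\mathcal{C}\mathcal{T}$ — every block has at most two cut-vertices — is not used for this lemma; it only ensures, additionally, that every non-leaf block of $G$ has exactly two cut-vertices, i.e.\ is a bridge-block.
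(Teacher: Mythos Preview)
Your proof is correct, but it follows a genuinely different route from the paper's argument. You invoke the block--cut-vertex tree $\mathcal{T}(G)$ and observe that its leaves must be blocks (since every cut-vertex lies in at least two blocks), then conclude from the fact that any tree with at least two vertices has at least two leaves. The paper instead argues directly: for $t=2$ blocks the claim is immediate; for $t\geqslant 3$ it first locates one leaf-block $B_s$ by an eccentricity argument (pick a cut-vertex $u$, find $v$ with $e_G(u)=d(u,v)$, and use the absence of cycles meeting more than one block to see that the block of $v$ is a leaf-block), then removes the non-cut-vertices of $B_s$ and appeals to induction on the number of blocks.

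Your approach is cleaner and more conceptual: once the block--cut-vertex tree is on the table, the lemma is an immediate corollary of the two-leaves fact for ordinary trees, and, as you note, it shows the $\mathcal{C}\mathcal{T}$ hypothesis is irrelevant here. The paper's approach is more hands-on and avoids importing $\mathcal{T}(G)$ as a black box, at the price of a slightly fiddly inductive step; it also foreshadows the kind of eccentricity reasoning used later.
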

\begin{proof}
 \textcolor{blue}{Let $G$ be a graph in  $\mathcal C \mathcal T^{\geqslant 2}$ and} $B_1, \dots, B_t$ be the pairwise distict blocks of $G$. If $t=2$, then $G$  is the coalescence of two complete graphs, which clearly  are two leaf-blocks for $G$.

Let now $t \geqslant 3$. We  \textcolor{blue}{model} the argument used to prove that every tree has at least two leaves.
We first show that $G$ surely has one leaf-block.
 If $B_1$ is a leaf-block we are done. Otherwise, let $u$ be one the its two cut-vertices. Surely there exists an $s \not=1$ such that, for a suitable $v\in B_s$ we have
$e_G(u)=d(u,v)$. Since there are no cycles in $G$ intersecting more than one block (see, for instance \cite[Lemma~3.1.2(i)]{D}) and the set $\{ w \in V_G \mid d(u,w) > d(u,v) \}$ is empty, $B_s$ is necessarily a leaf-block of $G$.

Now we can use an inductive argument on the number of blocks, noticing that the graph $G'$ obtained from $G$ by removing all the noncut-vertices of the leaf-block $B_s$ is a clique tree, whose blocks are precisely the $t-1$ remaining blocks of $G$. By the induction hypothesis, at least two blocks of $G'$, say $B_{i_1}$ and $B_{i_2}$, are leaf-blocks. Once again, the absence of a cycle intersecting more than one block shows that at least one set between $\{B_{i_1}, B_{i_2}\}$, $\{B_{i_1}, B_{s}\}$ $\{B_{i_2}, B_{i_s}\}$ only contains leaf-blocks.
\end{proof}

It is not restrictive to assume that the pairwise distinct blocks $B_1, \dots, B_t$ of $G$ all have at least two vertices.
After choosing an ordered vertex-labelling for $G \in \mathcal C \mathcal T^{\geqslant 2}$, say $\{v_1, \dots, v_n\}$, we denote by $\mathcal C_G$ the set of cut-vertices of $G$, and by $\mathcal S_G$ the subset of $V_G$ obtained by picking in each leaf-block the noncut-vertex with minimal label.
As it is customary, we denote by $G[\mathcal C_G \cup \mathcal S_G]$ the subgraph of $G$ induced by the vertices in $\mathcal C_G \cup \mathcal S_G$.

For each $G \in \mathcal C \mathcal T$ we now set
\[ T_G := \begin{cases} K_2 \qquad \quad \quad \;\;\;  \text{ if $G=K_n$,}\\
 G[\mathcal C_G \cup \mathcal S_G] \quad \text{ if $G \in \mathcal C \mathcal T^{\geqslant 2}$.}
\end{cases}
\]

From the definition and \cite[Lemma~3.1.2(i)]{D} we immediately arrive at the following result.
\begin{prop}\label{puno} For each $G \in \mathcal C \mathcal T$, the graph $T_G$ is a tree. Moreover, ${\rm diam} (G) = {\rm diam} (T_G)$.
\end{prop}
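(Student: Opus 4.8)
The plan is to handle the two assertions --- that $T_G$ is a tree and that $\mathrm{diam}(G)=\mathrm{diam}(T_G)$ --- separately, after disposing of the trivial case $G=K_n$, where $T_G=K_2$ and both diameters are $1$. So assume $G\in\mathcal C\mathcal T^{\geqslant 2}$. The cornerstone is a single observation about how $\mathcal C_G\cup\mathcal S_G$ meets one block: \emph{for every block $B$ of $G$ the set $V_B\cap(\mathcal C_G\cup\mathcal S_G)$ consists of exactly two vertices, and these are adjacent in $G$.} Indeed, since $G$ is not complete, every block is a leaf-block or a bridge-block. If $B$ is a bridge-block, its two cut-vertices lie in $\mathcal C_G$, and its noncut-vertices lie in no leaf-block, hence in no $\mathcal S_G$; if $B$ is a leaf-block, its unique cut-vertex lies in $\mathcal C_G$, exactly one of its noncut-vertices (the one of least label) lies in $\mathcal S_G$, and the remaining noncut-vertices of $B$ lie in no other block. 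In either case the two vertices so obtained are adjacent because $B$ is complete. It follows that every block contributes exactly one edge to $T_G$, that distinct blocks contribute distinct edges (two blocks share at most one vertex), and that every edge of $T_G$ lies in a unique block.

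To see that $T_G$ is a tree I would establish connectedness and acyclicity. For connectedness, given $x,y\in\mathcal C_G\cup\mathcal S_G$, take a shortest $x$--$y$ path in $G$, let $B_1,\dots,B_k$ be the blocks it traverses and $c_j=B_j\cap B_{j+1}$ the cut-vertices at which it passes from one block to the next; then $x,c_1,\dots,c_{k-1},y$ is a walk in $T_G$, because each consecutive pair lies in a common block and hence, by the cornerstone observation, is an edge of $T_G$. For acyclicity, suppose $u_1u_2\cdots u_ku_1$ were a cycle of $T_G$ (so $k\geqslant 3$, as $T_G$ is simple). Every vertex of $\mathcal S_G$ has degree $1$ in $T_G$ (it lies in a single leaf-block, which contributes a single edge), so all the $u_i$ lie in $\mathcal C_G$; thus each edge $u_iu_{i+1}$ lies in a bridge-block whose pair of cut-vertices is exactly $\{u_i,u_{i+1}\}$, and these bridge-blocks are pairwise distinct. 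But then $u_1u_2\cdots u_ku_1$ is a cycle of $G$ meeting more than one block, contradicting the fact --- already invoked in Lemma~\ref{lzero} --- that no cycle of a clique tree intersects more than one block. (An alternative to the acyclicity step is a quick count: since in the block-cut tree of $G$ every vertex representing a block has degree $1$ or $2$, one gets $|\mathcal C_G|=t-\ell+1$, where $t$ is the number of blocks and $\ell$ the number of leaf-blocks, so $|V_{T_G}|=|\mathcal C_G|+\ell=t+1=|E_{T_G}|+1$, and a connected graph with one more vertex than edges is a tree.)

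For the diameter I would first strengthen the cornerstone observation to the isometry statement $d_{T_G}(x,y)=d_G(x,y)$ for all $x,y\in V_{T_G}$: the inequality $d_G\leqslant d_{T_G}$ holds because $T_G$ is a subgraph of $G$, and $d_{T_G}\leqslant d_G$ follows by lifting the block-path from $x$ to $y$ to a $T_G$-walk of the same length, exactly as in the connectedness argument. This gives $\mathrm{diam}(T_G)\leqslant\mathrm{diam}(G)$ at once. For the reverse inequality it is enough to exhibit two vertices of $\mathcal S_G$ at $G$-distance $\mathrm{diam}(G)$. Here I would use the ``push outward'' device of Lemma~\ref{lzero}: if a diametral path of $G$ had an endpoint $u$ that was either a cut-vertex of $G$ or a noncut-vertex of a bridge-block, then on the far side of $u$ relative to the other endpoint there would be a noncut-vertex $u'$ of some leaf-block whose distance to the other endpoint strictly exceeds $\mathrm{diam}(G)$, a contradiction; hence both endpoints of any diametral path are noncut-vertices of (necessarily distinct) leaf-blocks. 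Since a noncut-vertex of a leaf-block lies at distance $1$ from that block's cut-vertex, replacing each endpoint by the $\mathcal S_G$-representative of its leaf-block leaves the distance unchanged, yielding $s,s'\in\mathcal S_G\subseteq V_{T_G}$ with $d_G(s,s')=\mathrm{diam}(G)$; then $\mathrm{diam}(T_G)\geqslant d_{T_G}(s,s')=\mathrm{diam}(G)$, and equality follows.

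The step I expect to need the most care is this last ``push outward'' argument for the diameter: one must verify, with the cut-vertex and bridge-block cases treated separately, that every endpoint of a diametral path is forced to be a noncut-vertex of a leaf-block, checking in each case that the branch one moves into really lies beyond $u$ as seen from the other endpoint, so that the relevant distances simply add. The block-to-edge correspondence, the tree property, and the isometry statement are, by contrast, bookkeeping that flows mechanically from the cornerstone observation together with the no-cycle-across-blocks property.
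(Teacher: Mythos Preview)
Your proof is correct and supplies in full the details that the paper omits: the paper's own proof is a one-line appeal to the definition of $T_G$ together with \cite[Lemma~3.1.2(i)]{D} (the fact that no cycle meets more than one block), and your argument unpacks precisely what that appeal entails. The key ingredient---the no-cycle-across-blocks property---is the same in both, and your ``cornerstone observation'' that each block contributes exactly one edge to $T_G$ is exactly the one-to-one correspondence the paper notes immediately after the proposition.
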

The nonpendant vertices of $T_G$ are precisely the cut-vertices of $G$, and there is a one-to-one corespondence between blocks of $G$ and edges of $T_G$.

Note that the isomorphism type of $T_G$ does not really depend on the chosen ordered vertex-labelling, since, for $G \in \mathcal C \mathcal T^{\geqslant 2}$, a shuffling of vertex-labels does not alter the number and the degree of the nonpendant vertices of $T_G$.

Proposition \ref{puno} makes senseful to call $T_G$ the tree {\em associated to $G \in \mathcal C \mathcal T$}.  The clique tree $G$ depicted in Fig.\ 1 has seven blocks. The leaf-blocks among them are those containing $v_1$, $v_7$, $v_9$, $v_{12}$ and $v_{13}$; the homonymous vertices in the associated tree $T_G$ are precisely those of vertex-degree $1$.

Clique trees in $\mathcal C \mathcal T$ such that the associated tree is a path have been considered in \cite{JGZ,LLL}, where they are called {\em clique paths}. In \cite{LLL} the authors also studied the {\em clique stars}, i.e.\  clique trees in $\mathcal C \mathcal T$ whose associated tree is a star (see {\rm \cite[Fig.~1]{LLL}}).

\begin{figure}
\begin{center}
{\begin{tikzpicture}[vertex1_style/.style={circle,draw,minimum size=0.14 cm,inner sep=0pt, fill=black},vertex2_style/.style={circle,draw,minimum size=0.07 cm,inner sep=0pt, fill=black}, nonterminal/.style={
rectangle,
minimum size=2mm,
thin,
draw=black,
top color=white, 
bottom color=white!50!white!50, 
font=\itshape
}]
   \begin{scope}[xshift=10em]
                 \node[nonterminal] at (6.8,1) {$T_G$};
 \node[vertex1_style, label=above:\small$v_1$] (z1) at  (.8,0.54) {};
 \node[vertex1_style, label=above:\small$v_5$] (z5) at  (1.5,0) {};
  \draw[thick] (z1)--(z5);
 \node[vertex1_style, label=above:\small$v_6$] (z6) at  (2.6,0) {};
 \draw[thick] (z6)--(z5);
\node[vertex1_style, label=below:\small$v_7$] (z7) at  (2.25,-1) {};
\node[vertex1_style, label=right:\small$v_9$] (z9) at  (3.35,-.8) {};
 \draw[thick] (z6)--(z7);
 \draw[thick] (z6)--(z9);
\node[vertex1_style, label=south east:\small$v_{11}$] (z11) at  (3.4,.7) {};
\node[vertex1_style, label=left:\small$v_{12}$] (z12) at  (2.6,1.4) {};
\node[vertex1_style, label=south east:\small$v_{13}$] (z13) at  (4.65,1) {};
 \draw[thick] (z6)--(z11)--(z12);
 \draw[thick] (z11)--(z13);
		
\end{scope}
   \begin{scope}[ xshift=-10em]
              \node[nonterminal] at (-1,1) {$G$};
 \node[vertex1_style, label=above:\small$v_1$] (z1) at  (.8,0.54) {};
 \node[vertex1_style, label=left:\small$v_2$] (z2) at  (0.1,0) {};
 \node[vertex1_style, label=above:\small$v_5$] (z5) at  (1.5,0) {};
\node[vertex1_style, label=below:\small$v_3$] (z3) at  (.4,-0.8) {};
  \node[vertex1_style, label=below:\small$v_4$] (z4) at  (1.2,-0.8) {};
  \draw[thick] (z1)--(z2)--(z3)--(z4)--(z5)--(z1)--(z3)--(z5)--(z2)--(z4)--(z1);
 \node[vertex1_style, label=above:\small$v_6$] (z6) at  (2.6,0) {};
 \draw[thick] (z6)--(z5);
\node[vertex1_style, label=below:\small$v_7$] (z7) at  (2.25,-1) {};
\node[vertex1_style, label=below:\small$v_8$] (z8) at  (2.95,-1) {};
\node[vertex1_style, label=right:\small$v_9$] (z9) at  (3.35,-.8) {};
\node[vertex1_style, label=right:\small$v_{10}$] (z10) at  (3.7,-.3) {};
 \draw[thick] (z6)--(z7)--(z8)--(z6);
 \draw[thick] (z6)--(z9)--(z10)--(z6);
\node[vertex1_style, label=south east:\small$v_{11}$] (z11) at  (3.4,.7) {};
\node[vertex1_style, label=left:\small$v_{12}$] (z12) at  (2.6,1.4) {};
\node[vertex1_style, label=above:\small$v_{14}$] (z14) at  (5,2.1) {};
\node[vertex1_style, label=above:\small$v_{15}$] (z15) at  (3.85,1.8) {};
\node[vertex1_style, label=south east:\small$v_{13}$] (z13) at  (4.65,1) {};
 \draw[thick] (z6)--(z11)--(z12);
 \draw[thick] (z11)--(z15)--(z14)--(z13)--(z11)--(z13);
 \draw[thick] (z13)--(z15);
 \draw[thick] (z11)--(z14);
\end{scope}

\end{tikzpicture} }
\end{center}
  \caption{ \label{forte}  \small A clique tree $G \in \mathcal C \mathcal T$ and its associated tree $T_G$}
  \end{figure}
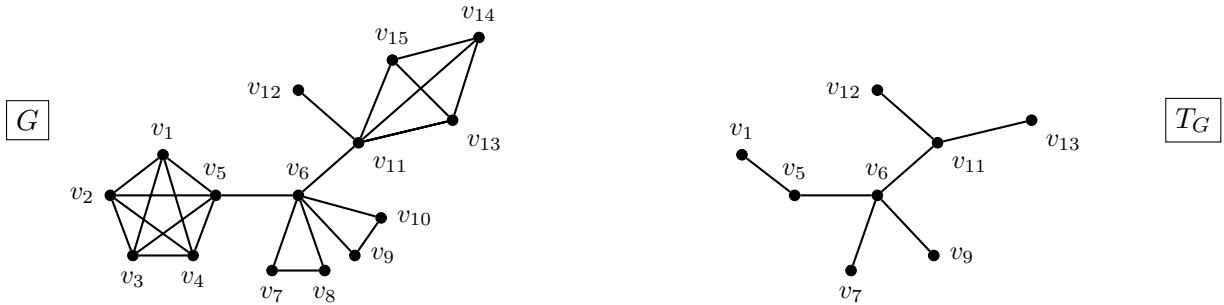

The following lemma, whose immediate proof only requires the very basic structural properties of the graphs involved, turns out to be useful in several proofs.

\begin{lem}\label{luno}  Let $G$ be a clique tree in $\mathcal C \mathcal T^{\geqslant 2}$. For each $u \in V_G$, there exists a vertex $v \in T_G$ such that
$e_G(u) = d_G(u,v)$.
\end{lem}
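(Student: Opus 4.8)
The plan is to fix $u \in V_G$, choose a vertex $v$ with $e_G(u)=d_G(u,v)$, and argue that either $v$ already lies in $T_G=G[\mathcal C_G\cup\mathcal S_G]$ or it can be replaced by a vertex of $T_G$ at the same distance from $u$. If $v\in\mathcal C_G$ there is nothing to prove, so we may assume $v$ is a noncut-vertex and therefore lies in a unique block $B$ of $G$; since $G\in\mathcal C\mathcal T$, the block $B$ is either a leaf-block or a bridge-block, and the argument splits on this dichotomy. Throughout we use the standing assumption that every block of $G$ has at least two vertices, together with the acyclic block structure recorded via \cite[Lemma~3.1.2(i)]{D}.

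First I would dispose of the case $e_G(u)=1$: then $u$ is adjacent to every other vertex, and by Lemma~\ref{lzero} the set $\mathcal S_G$ has at least two elements, so some $s\in\mathcal S_G\subseteq T_G$ satisfies $s\neq u$ and $d_G(u,s)=1=e_G(u)$. Hence we may assume $e_G(u)\geqslant 2$; since $v$ realises $e_G(u)$ and $B$ is a clique, this forces $u\notin B$.

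The core case is $B$ a bridge-block with cut-vertices $c_1,c_2$, and I claim it cannot occur. As $u\notin B$, the vertex $u$ lies on one of the two sides that $B$ separates, say the side attached to $c_1$; then every $u$--$v$ path meets $c_1$, and, because $c_1\sim c_2$, one gets $d_G(u,v)=d_G(u,c_1)+1=d_G(u,c_2)$. Now $c_2$ is a cut-vertex of $G$, so $G\setminus c_2$ has a component $R$ disjoint from $B\setminus\{c_2\}$ (hence not containing $v$), and $c_2$ separates $u$ from $R$; picking any $w\in R$ we obtain $d_G(u,w)=d_G(u,c_2)+d_G(c_2,w)\geqslant d_G(u,c_2)+1=e_G(u)+1$, contradicting the definition of eccentricity. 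Therefore $B$ is a leaf-block. Letting $c$ be its unique cut-vertex, every $u$--$v$ path passes through $c$, so $d_G(u,v)=d_G(u,c)+1$; the very same computation applies to the minimum-label noncut-vertex $v^{*}$ of $B$, which exists since $B$ has at least two vertices, belongs to $\mathcal S_G\subseteq T_G$, is distinct from $u$, and satisfies $d_G(u,v^{*})=d_G(u,c)+1=e_G(u)$. This gives the required vertex of $T_G$.

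The only genuinely delicate step is the bridge-block case: one must pick the cut-vertex of $B$ that is "behind" $u$ so that the distances from $u$ to the two cut-vertices of $B$ differ by exactly one, and then invoke the cut-vertex property of $c_2$ to produce a vertex $w$ that is strictly farther from $u$ than $v$; all the distance identities used are routine consequences of the block-tree structure of $G$.
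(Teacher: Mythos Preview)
Your proof is correct. The paper does not actually give a proof of this lemma; it merely remarks that the ``immediate proof only requires the very basic structural properties of the graphs involved'' and moves on. Your case analysis---showing that an eccentricity-realising vertex cannot be a noncut-vertex of a bridge-block (by pushing past the far cut-vertex to get a contradiction), and that in a leaf-block it may be swapped for the designated $\mathcal S_G$-vertex of that block---is precisely the kind of argument the authors are alluding to, and you have written out the details carefully, including the boundary case $e_G(u)=1$.
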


\begin{lem}\label{punoemezzo} Let $G$ be a clique tree in $\mathcal C \mathcal T^{\geqslant 2}$. For each $u \in V_{T_G} \subseteq V_G$ the eccentricities \textcolor{blue}{$e_G(u)$=$e_{T_G}(u)$}.
\end{lem}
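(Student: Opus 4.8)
The plan is to deduce the identity from two ingredients: first, that $T_G$ — although merely an induced subgraph of $G$ — is \emph{isometric} in $G$, i.e.\ $d_G(x,y)=d_{T_G}(x,y)$ for all $x,y\in V_{T_G}$; and second, Lemma \ref{luno}, which guarantees that the eccentricity of any vertex of $G$ is attained on $V_{T_G}$.

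For the isometry claim, the inequality $d_G(x,y)\leqslant d_{T_G}(x,y)$ holds because every $T_G$-path is a $G$-path. For the reverse inequality I would fix a shortest $x$--$y$ path $P$ in $G$ and show that each of its internal vertices belongs to $\mathcal C_G$. Indeed, a noncut-vertex $w$ of $G$ lies in a single block $B$ and all edges incident with $w$ lie in $B$; so were $w$ an interior vertex of $P$, its two neighbours on $P$ would lie in the clique $B$, hence be adjacent, and $P$ could be shortened — a contradiction. Therefore every vertex of $P$ lies in $\mathcal C_G\cup\{x,y\}\subseteq\mathcal C_G\cup\mathcal S_G=V_{T_G}$; since $T_G=G[\mathcal C_G\cup\mathcal S_G]$ is an induced subgraph, $P$ is itself a path of $T_G$, and so $d_{T_G}(x,y)\leqslant |P|=d_G(x,y)$.

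Granting this, let $u\in V_{T_G}$. Using the isometry on the left and monotonicity of the maximum on the right,
\[ e_{T_G}(u)=\max_{w\in V_{T_G}}d_{T_G}(u,w)=\max_{w\in V_{T_G}}d_G(u,w)\leqslant\max_{w\in V_G}d_G(u,w)=e_G(u). \]
Conversely, Lemma \ref{luno} yields a vertex $v\in V_{T_G}$ with $e_G(u)=d_G(u,v)$, and then $e_G(u)=d_G(u,v)=d_{T_G}(u,v)\leqslant e_{T_G}(u)$. Combining the two inequalities gives $e_G(u)=e_{T_G}(u)$.

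The only step requiring genuine care is the ``interior vertices of geodesics are cut-vertices'' observation used to prove the isometry of $T_G$; once that is in place, the rest is a routine combination of the definitions of $\mathcal C_G$, $\mathcal S_G$ and $T_G$ with the previously established Lemma \ref{luno}.
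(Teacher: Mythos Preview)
Your proof is correct and follows essentially the same route as the paper's: both arguments hinge on Lemma~\ref{luno} together with the observation that the interior vertices of any geodesic in $G$ are cut-vertices, so that a shortest path between two vertices of $V_{T_G}$ already lies in $T_G$. You package this as a standalone isometry statement $d_G=d_{T_G}$ on $V_{T_G}$ and supply the short justification (non-cut interior vertex $\Rightarrow$ both neighbours in a common clique $\Rightarrow$ shortcut) that the paper leaves implicit; the paper, by contrast, invokes the geodesic observation only for the single pair $(u,v)$ furnished by Lemma~\ref{luno} and handles the inequality $e_{T_G}(u)\leqslant e_G(u)$ with the terse phrase ``since $T_G$ is an induced subgraph of $G$'', which your isometry argument makes rigorous.
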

\begin{proof}The inequality  $e_{T_G}(u) \leqslant e_G(u)$ holds since $T_G$ is an induced subgraph of $G$. The rest of the proof is devoted to prove the opposite inequality, i.e.\ $e_{T_G}(u) \geqslant e_G(u)$. Lemma \ref{luno} ensures that $e_G(u)=d_G(u,v)$  for a suitable vertex $v \in V_{T_G}$. In other words, there exists a path $P$ in $G$ connecting $u$ and $v$ such that $\lvert E_P\rvert=e_G(u)$. By definition, $P$ has the minimal lenght among the paths connecting $u$ and $v$, that is why the set $V_P \setminus \{u,v\}$  is included in $\mathcal C_G$ or, equivalently,  $P$ is also a subgraph of $T_G$. Therefore, $e_G(u)=d_G(u,v) = d_{T_G}(u,v) \leqslant e_{T_G}(u)$, and the proof is over.
\end{proof}

\begin{prop}\label{pdue} Let $G$ be a clique tree in $\mathcal C \mathcal T^{\geqslant 2}$. The matrix $\mathcal E(T_G)$ is the principal submatrix of $\mathcal E(G)$ obtained by considering rows and columns indexed in $V_{T_G}$.
\end{prop}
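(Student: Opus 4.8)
The plan is to verify that $\mathcal E(G)$ and $\mathcal E(T_G)$ agree entry by entry on the index set $V_{T_G}\subseteq V_G$, which is exactly what the statement asserts. Fix $u,v\in V_{T_G}$. By the very definition of the eccentricity matrix, the $(u,v)$-entry of $\mathcal E(G)$ equals $d_G(u,v)$ when $d_G(u,v)=\min\{e_G(u),e_G(v)\}$ and equals $0$ otherwise, while the $(u,v)$-entry of $\mathcal E(T_G)$ equals $d_{T_G}(u,v)$ when $d_{T_G}(u,v)=\min\{e_{T_G}(u),e_{T_G}(v)\}$ and equals $0$ otherwise. Hence it suffices to establish the two identities $d_G(u,v)=d_{T_G}(u,v)$ and $\min\{e_G(u),e_G(v)\}=\min\{e_{T_G}(u),e_{T_G}(v)\}$; once both hold, the two prescriptions produce the same value in every case, diagonal entries included.

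The second identity is immediate: Lemma \ref{punoemezzo} gives $e_G(w)=e_{T_G}(w)$ for every $w\in V_{T_G}$, so in particular for $w=u$ and $w=v$. For the first identity, the inequality $d_{T_G}(u,v)\geqslant d_G(u,v)$ is automatic because $T_G$ is a subgraph of $G$. For the reverse inequality I would reuse the short structural argument already used in the proof of Lemma \ref{punoemezzo}: pick a geodesic $P$ in $G$ from $u$ to $v$ and let $w$ be an internal vertex of $P$, with neighbours $w^{-}$ and $w^{+}$ on $P$. If $w$ were not a cut-vertex of $G$, it would lie in a single block $B$, and every neighbour of $w$ — in particular $w^{-}$ and $w^{+}$ — would also lie in $B$; since $B$ is a clique, $w^{-}$ and $w^{+}$ would be adjacent, contradicting that $P$ is a geodesic. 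Thus every internal vertex of $P$ lies in $\mathcal C_G\subseteq V_{T_G}$, and since $u,v\in V_{T_G}$ as well, the whole path $P$ is contained in the induced subgraph $T_G=G[\mathcal C_G\cup\mathcal S_G]$. Consequently $d_{T_G}(u,v)\leqslant \lvert E_P\rvert=d_G(u,v)$, and equality follows.

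Combining the two identities, the $(u,v)$-entries of $\mathcal E(G)$ and $\mathcal E(T_G)$ coincide for all $u,v\in V_{T_G}$, which proves that $\mathcal E(T_G)$ is the principal submatrix of $\mathcal E(G)$ indexed by $V_{T_G}$. There is essentially no obstacle here: the whole argument reduces to the two elementary identities above, and the only point deserving a little care is the distance identity $d_G(u,v)=d_{T_G}(u,v)$, which is itself a routine consequence of the block-tree structure of $G$ and is, in fact, already implicit in the proof of Lemma \ref{punoemezzo}.
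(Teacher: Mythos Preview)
Your proof is correct and follows essentially the same approach as the paper, which also deduces the equality of entries from Lemma~\ref{punoemezzo}. If anything, you are more explicit than the paper: the paper's one-sentence proof cites only the eccentricity identity $e_G(u)=e_{T_G}(u)$, leaving the distance identity $d_G(u,v)=d_{T_G}(u,v)$ implicit (it is established inside the proof of Lemma~\ref{punoemezzo}), whereas you spell out both ingredients and give a clean self-contained argument for the distance identity.
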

\begin{proof} The entries $\mathcal E(T_G)_{uv}$ and  $\mathcal E(G)_{uv}$ are equal for every pair $(u,v) \in V_{T_G} \times V_{T_{G}}$, since, by Lemma \ref{punoemezzo}, for every vertex $u$ in $V_{T_G}$, we have $e_{T_G}(u) = e_G(u)$.
\end{proof}

From Proposition \ref{pdue} it immediately comes the following useful corollary. In its statement and throughout the paper, we write  $\Gamma_{G}  $ and $\Gamma_{T_G}$ to  respectively denote the graphs $\Gamma(\mathcal E (G))$ and  $\Gamma(\mathcal E (T_G))$.
\begin{cor}\label{c1} For each clique tree $G$ in $\mathcal C \mathcal T^{\geqslant 2}$, The graph $\Gamma_{T_G}$ is an induced subgraph of $\Gamma_{G}$.
\end{cor}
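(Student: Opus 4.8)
The plan is to obtain the statement as an immediate consequence of Proposition~\ref{pdue}, by unwinding the definition of $\Gamma(\cdot)$. Write $A[S]$ for the principal submatrix of a symmetric matrix $A$ with rows and columns indexed by a set $S$. First I would record the elementary fact that forming the indicator matrix commutes with extracting a principal submatrix: since the $(u,v)$-entry of $M(A)$ depends only on whether the $(u,v)$-entry of $A$ vanishes, we have $M(A[S]) = M(A)[S]$ for every index set $S$.

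Next I would apply this with $A = \mathcal E(G)$ and $S = V_{T_G}$ (recall $V_{T_G} \subseteq V_G$, so the labels of $T_G$ genuinely index a subset of the rows and columns of $\mathcal E(G)$). Proposition~\ref{pdue} gives $\mathcal E(T_G) = \mathcal E(G)[V_{T_G}]$, hence $M(\mathcal E(T_G)) = M(\mathcal E(G))[V_{T_G}]$. In graph-theoretic terms, the left-hand side is the adjacency matrix of $\Gamma_{T_G}$, while the right-hand side is the adjacency matrix of the subgraph of $\Gamma_G$ induced on the vertex set $V_{T_G}$. Two graphs on the same labelled vertex set with the same adjacency matrix coincide, so $\Gamma_{T_G}$ equals the subgraph of $\Gamma_G$ induced on $V_{T_G}$, which is precisely the claim.

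I do not expect any genuine obstacle: the corollary is essentially a one-step translation of Proposition~\ref{pdue} into the language of the graphs $\Gamma(\cdot)$. The only point requiring a little care is the consistent use of vertex labels, i.e.\ that a fixed ordered labelling of $V_G$ restricts to the chosen labelling of $V_{T_G}$, so that ``principal submatrix on $V_{T_G}$'' and ``induced subgraph on $V_{T_G}$'' refer to the same index set; once that convention is fixed, the proof is the two lines above.
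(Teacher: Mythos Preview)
Your proposal is correct and follows exactly the same approach as the paper, which simply states that the corollary follows immediately from Proposition~\ref{pdue}. You have merely spelled out the one-line translation (indicator matrices commute with taking principal submatrices) that the paper leaves implicit.
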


\begin{prop}\label{2.5} For each clique tree $G$ in $\mathcal C \mathcal T$, the matrix $\mathcal E(G)$ is irreducible.
\end{prop}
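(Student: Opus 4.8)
The plan is to apply Theorem \ref{2.4}: since $\mathcal E(G)$ is a nonnegative symmetric matrix, its irreducibility is equivalent to the connectedness of the graph $\Gamma_G$ (when $|V_G|>1$; the case $|V_G|=1$ is trivial). The complete graphs are handled at once: if $G=K_n$ with $n\geqslant 2$, then every eccentricity and every off-diagonal distance equals $1$, so $\mathcal E(K_n)$ is just the adjacency matrix of $K_n$ and $\Gamma_G=K_n$ is connected. Hence I may assume $G\in\mathcal C\mathcal T^{\geqslant 2}$, and I will exploit the associated tree $T_G$. By Proposition \ref{puno}, $T_G$ is a tree with ${\rm diam}(T_G)={\rm diam}(G)\geqslant 2$, so it has at least three vertices; Theorem \ref{2.3} then says that $\mathcal E(T_G)$ is irreducible, whence $\Gamma_{T_G}$ is connected by Theorem \ref{2.4}, and by Corollary \ref{c1} it is precisely the subgraph of $\Gamma_G$ induced on the vertex set $V_{T_G}$.

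It therefore suffices to prove that every vertex $w\in V_G\setminus V_{T_G}$ is adjacent in $\Gamma_G$ to at least one vertex of $V_{T_G}$: combined with the connectedness of $\Gamma_{T_G}$, this makes $\Gamma_G$ connected. The key elementary remark is that any eccentric vertex of $w$ does the job: if $d_G(w,v)=e_G(w)$, then $e_G(v)\geqslant d_G(v,w)=e_G(w)$, so that $\min\{e_G(w),e_G(v)\}=e_G(w)=d_G(w,v)$; since $G$ has more than one vertex this common value is at least $1$, hence $(\mathcal E(G))_{wv}=e_G(w)\neq 0$ and $w\sim v$ in $\Gamma_G$. To make sure such an eccentric vertex $v$ can be found inside $V_{T_G}$ I invoke Lemma \ref{luno}, which for every $u\in V_G$ produces a vertex $v\in V_{T_G}$ with $e_G(u)=d_G(u,v)$. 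Applying this with $u=w$ finishes the argument.

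I do not anticipate a genuine obstacle: the proof is a combination of results already in hand, and the only steps requiring a little care are (i) making certain that $T_G$ has more than one vertex so that Theorems \ref{2.3} and \ref{2.4} genuinely apply to it — which is exactly why the complete graphs, for which $T_G=K_2$ is only a formal device, must be treated separately — and (ii) recalling that $\mathcal E(G)$ is nonnegative and symmetric so that Theorem \ref{2.4} is available. All the real structural information is already packaged inside Lemma \ref{luno} together with the trivial observation that an eccentric vertex $v$ of $w$ satisfies $e_G(v)\geqslant e_G(w)$.
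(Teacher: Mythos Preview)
Your proof is correct and follows essentially the same route as the paper: reduce irreducibility to connectedness of $\Gamma_G$ via Theorem~\ref{2.4}, use Theorem~\ref{2.3} and Corollary~\ref{c1} to get a connected $\Gamma_{T_G}$ inside $\Gamma_G$, and then attach every remaining vertex to $V_{T_G}$ via Lemma~\ref{luno}. Your version is in fact slightly cleaner than the paper's, since you handle all $G\in\mathcal C\mathcal T^{\geqslant 2}$ uniformly (the paper treats ${\rm diam}(G)=2$ separately by quoting \cite{AKP}) and you only need to show each $w\notin V_{T_G}$ has a $\Gamma_G$-neighbour in $V_{T_G}$ rather than building a path between an arbitrary pair.
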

\begin{proof} Let $G$ be a graph in  $\mathcal C \mathcal T$. If ${\rm diam} (G)=1$, then $G$ is complete and the matrix $\mathcal E (G)=A(G)$ is irreducible, being the adjacency matrix of a connected graph. If ${\rm diam}(G)=2$, then  $G$ is a clique star, i.e.\ the coalescence at a single cut-vertex of a bunch of complete graphs. In this case, the irriducibility of $\mathcal E(G)$ comes from \cite[Theorem~3.1]{AKP}.

Let now ${\rm diam }(G) \geqslant 3$.  By Theorem \ref{2.4} it will suffice to show that $\Gamma_G$ is connected.
 Theorem \ref{2.4}, together with Theorem \ref{2.3} and Proposition \ref{puno}, also  guarantees that $\mathcal E(T_G)$ is irreducible and the graph $\Gamma_{T_G}$  is connected.
The proof will end by showing that each pair $u_{1},u_{2}$ of distinct vertices of $G$ are connected by a suitable path in $\Gamma_G$.

If $\mathcal E(G)_{u_1u_2}= {\rm min} \{e_G(u_{1}), e_G(u_{2})\}\neq0$, then $u_1$ and $u_2$ are adjacent in $\Gamma_G$ and there is nothing else to prove.

 If $\mathcal E(G)_{u_1u_2}= 0$, by Lemma \ref{luno} we find in $V_{T_G}$ two vertices $v_1$ and $v_2$ such that $e_G(u_{i})=d(u_{i}, v_{i})$ for  $i \in \{1,2\}$. It follows that $\mathcal E(G)_{u_{i}v_{i}}= {\rm min} \{e_G(u_{i}),e_G(v_{i})\}\neq 0$, implying that $u_{i}$ and $v_{i}$ are adjacent in $\Gamma_G$. A path in $\Gamma_G$ connecting $u_1$ and $u_2$ is obtained by gluing the edge $u_1v_1$, a path between $v_1$ and $v_2$ (which exists since $\Gamma_{T_G}$ is connected and, by Corollary \ref{c1}, is an induced subgraph of $\Gamma_G$) and the edge $v_2u_2$.
\end{proof}

We end this section with a result on the eccentricities of noncut-vertices in a clique tree with at least two blocks.
\begin{prop}\label{ptre} Let $G$ be a clique tree in $\mathcal C \mathcal T^{\geqslant 2}$.\\
(i) If $u$ is a noncut-vertex of $G$ belonging to a leaf-block $B$, then $e_G(u)=e_G(w)+1$, where $w$ is the only cut-vertex of $B$.\\
(ii) If $u$ is a noncut-vertex of $G$ belonging to a bridge-block $B$, then $e_G(u)= \max \{e_G(w_1), e_G(w_2) \}$, where $w_1$ and $w_2$ are the two cut-vertices of $B$.
\end{prop}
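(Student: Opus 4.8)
The plan is to reduce both parts to a single distance computation valid for an arbitrary noncut-vertex of a block, and then finish with a two-line case analysis. Fix a noncut-vertex $u$ of $G$ lying in a block $B$, and write $\mathcal{C}_B=\mathcal{C}_G\cap V_B$ for the set of cut-vertices of $G$ contained in $B$. First I would record two structural facts, both immediate from the fact that no cycle of $G$ meets more than one block (cf.\ \cite[Lemma~3.1.2(i)]{D} and the proof of Lemma~\ref{lzero}): (a) every neighbour of $u$ lies in $V_B$, so that $d_G(u,w')=1$ for each $w'\in\mathcal{C}_B$; (b) for every $x\in V_G\setminus V_B$ some cut-vertex $w\in\mathcal{C}_B$ separates $u$ from $x$ in $G$, whence $d_G(u,x)=d_G(u,w)+d_G(w,x)=1+d_G(w,x)$, and since also $d_G(u,x)\leqslant 1+d_G(w',x)$ for every $w'\in\mathcal{C}_B$,
\[
d_G(u,x)=1+\min_{w'\in\mathcal{C}_B}d_G(w',x).
\]
Together with $d_G(u,x)=1$ for $x\in V_B\setminus\{u\}$, this gives the master formula
\[
e_G(u)=\max\Bigl(1,\ 1+\max_{x\in V_G\setminus V_B}\ \min_{w'\in\mathcal{C}_B}d_G(w',x)\Bigr).
\]

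For part (i) one has $\mathcal{C}_B=\{w\}$, so the master formula becomes $e_G(u)=\max\bigl(1,\,1+\max_{x\notin V_B}d_G(w,x)\bigr)$. As $G\in\mathcal C\mathcal T^{\geqslant 2}$ is not complete, $V_B\subsetneq V_G$, and every $x\notin V_B$ satisfies $d_G(w,x)\geqslant 1$; hence $\max_{x\notin V_B}d_G(w,x)\geqslant 1$ and the formula collapses to $e_G(u)=1+\max_{x\notin V_B}d_G(w,x)$. On the other hand, separating the eccentricity of $w$ according to whether a farthest vertex lies in $V_B$ or not gives $e_G(w)=\max\bigl(1,\,\max_{x\notin V_B}d_G(w,x)\bigr)=\max_{x\notin V_B}d_G(w,x)$, again because that maximum is at least $1$. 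Comparing the two identities yields $e_G(u)=e_G(w)+1$.

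For part (ii) one has $\mathcal{C}_B=\{w_1,w_2\}$ with $w_1w_2\in E_G$ (both lie in the clique $B$), and the block structure gives a partition $V_G\setminus V_B=X_1\sqcup X_2$, where $X_i$ is the union of the components of $G\setminus w_i$ not meeting $V_B\setminus\{w_i\}$; each $X_i$ is nonempty since $w_i$ is a cut-vertex. Set $\alpha=\max_{x\in X_1}d_G(w_1,x)$ and $\beta=\max_{x\in X_2}d_G(w_2,x)$, so $\alpha,\beta\geqslant 1$; note that for $x\in X_1$ the vertex $w_1$ separates $w_2$ from $x$, so $d_G(w_2,x)=1+d_G(w_1,x)$, and symmetrically on $X_2$. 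Substituting into the master formula for $e_G(u)$ and into the analogous (and easier) expansions of $e_G(w_1)$ and $e_G(w_2)$ over the partition $V_B\sqcup X_1\sqcup X_2$ gives
\[
e_G(u)=1+\max(\alpha,\beta),\qquad e_G(w_1)=\max(\alpha,\,1+\beta),\qquad e_G(w_2)=\max(\beta,\,1+\alpha),
\]
and therefore $\max\{e_G(w_1),e_G(w_2)\}=\max\{\alpha,\beta,1+\alpha,1+\beta\}=1+\max(\alpha,\beta)=e_G(u)$.

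The one step that is not pure bookkeeping is fact (b) above — that a \emph{single} cut-vertex of $B$ lies on every path from $u$ to a prescribed vertex $x$ outside $B$ (and, in (ii), the companion claim that $V_G\setminus V_B$ splits cleanly as $X_1\sqcup X_2$). I would argue it exactly as in Lemma~\ref{lzero}: since $u$ is a noncut-vertex, a walk leaving $B$ must do so through a vertex of $B$ shared with another block, i.e.\ through a cut-vertex in $\mathcal{C}_B$; and because no cycle of $G$ meets two blocks, that exit vertex is uniquely determined by $x$ and it disconnects $u$ from $x$ in $G$.
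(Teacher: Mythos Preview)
Your proof is correct, and it takes a somewhat different route from the paper's. The paper argues each of the two parts separately by direct inequality chasing: for (i) it picks vertices $v'$ and $v''$ realising $e_G(u)$ and $e_G(w)$ respectively, observes that the relevant geodesics pass through $w$, and deduces $e_G(w)\geqslant e_G(u)-1$ and $e_G(u)\geqslant e_G(w)+1$; for (ii) it assumes $e_G(w_1)\geqslant e_G(w_2)$ and, by a similar geodesic-swapping argument (replacing the edge $w_1w_2$ by $uw_2$ on a path realising $e_G(w_1)$), shows $e_G(u)=e_G(w_1)$. Your approach instead sets up a single ``master formula'' $d_G(u,x)=1+\min_{w'\in\mathcal C_B}d_G(w',x)$ valid for every $x\notin V_B$, derived once from the block--cut structure, and then specialises to $|\mathcal C_B|=1$ or $2$. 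The payoff is a more uniform treatment: part (ii) becomes a purely arithmetic identity in the two parameters $\alpha,\beta$, with no WLOG assumption and no case split on which $w_i$ lies on a geodesic. The paper's argument is shorter on the page and closer to first principles; yours is more structural and would extend cleanly to blocks with more cut-vertices, were that ever needed.
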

\begin{proof} Let $v'$ be a vertex in $G$ such that $e_G(u)=d_G(u,v')$. Since $ {\rm diam} (G)\geqslant 2$, then $v'$ belongs to a block $B'\not=B$, and the minimal path $P$ connecting $u$ and $v'$ surely contains the cut-vertex $w$. Thus $e_G(w) \geqslant e_G(u)-1$. Part (i) will be proved once we show that
\begin{equation}\label{maueq1}
e_G(w) \leqslant e_G(u)-1
\end{equation} holds as well. In order to see this, it is clear that there exists a vertex $v''$ belonging to a block $B''\neq B$ such that $e_G(w) = d_G(w,v'')$. This means that $e_G(u) \geqslant d(u,v'')= d_G(w,v'')+1 = e_G(w)+1$, proving \eqref{maueq1}.

Now we prove Part (ii). It is not restrictive to assume $e_G(w_1) \geqslant e_G(w_2)$. We shall prove that for a vertex $u$ satisfying the hypothesis, we have
$e_G(u)=e_G(w_1)$. Let $z$ be a vertex of $G$ such that $e_G(w_1)=d_G(w_1,z)$. A minimal path \textcolor{blue}{P} connecting $w_1$ to $z$ surely contains $w_2$,
 otherwise $e_G(w_2) \geqslant d(w_2,z) =d(w_1,z)+1> e_G(w_1)$, a contradiction. It is obvious that a minimal path connecting $u$ and $z$ can be obtained from $P$ by replacing the edge \textcolor{blue}{$w_1w_2$} with \textcolor{blue}{$u w_2$}. Therefore $e_G(u) \geqslant d_G(u,z) =d_G(w_1,z)=e_G(w_1)$. To complete the proof, we need to show that the inequality
\begin{equation}\label{maueq2}
e_G(u)\leqslant e_G(w_1)
\end{equation} holds as well.
Let $v'$ be a vertex of $G$ such that $e_G(u)=d_G(u,v')$, and let $P'$ be a minimal path connecting $u$ to $v'$. Surely $v'$ belongs to a block $B'\neq B$.
This means that $\lvert V_{P'} \cap \{ w_1, w_2 \}\rvert =1$.
We now distinguished two cases. If $w_1$ is in  $V_{P'} $, then $d_G(w_2,v')=d_G(u,v')$; therefore $e_G(w_1) \geqslant e_G(w_2) \geqslant
d_G(w_2,v') = e_G(u)$ and we are done. If, otherwise, $w_2$ belongs to $V_{P'}$, then $d_G(w_1,v')=d_G(u,v')$, therefore $e_G(u) = d_G(w_1,v') \leqslant e_G(w_1)$. Hence \eqref{maueq2} holds in this case too.
\end{proof}

\section{Inertia of eccentricity matrices of clique trees in $\mathcal C \mathcal T$}
 As already noted by Jordan in the nineteenth century,  every tree with even diameter has exactly one center; whereas a tree with odd diameter has two centers, necessarily adjacent   (a proof of this fact can be found in \cite[Theorem 4.2]{Har}).
The next theorem generalizes this classical result to cliques tree in $\mathcal C \mathcal T$. In order to make more concise its statement, we set
\[ \mathcal C \mathcal T^{\geqslant 2}_{ev} = \{ G \in \mathcal C \mathcal T^{\geqslant 2} \; \mid \text{ ${\rm diam} (G)$ is even} \} \quad \text{and}
\quad \mathcal C \mathcal T^{\geqslant 2}_{odd} = \{ G \in \mathcal C \mathcal T^{\geqslant 2} \; \mid \text{ ${\rm diam} (G)$ is odd} \}.   \]

\begin{thm}\label{3.3} Let $G$ be  a clique tree in $\mathcal C \mathcal T$, and let $C(G)$ denote its center.\\[.2em]
(i) if $G$ is complete, then $C(G)=V_G$;\\[.2em]
(ii) if $G$ belongs to  $\mathcal C \mathcal T^{\geqslant 2}_{ev}$,  then $G$ has exactly one central vertex; namely, the center of $T_G$;\\[.2em]
(iii) if $G$ belongs to  $\mathcal C \mathcal T^{\geqslant 2}_{odd}$,  then the center $C(G)$ is the block of $G$ containing the two centers on $T_G$.
\end{thm}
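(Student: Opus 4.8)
Part (i) is immediate: in $K_n$ with $n\geqslant 2$ every vertex has eccentricity $1={\rm rad}(G)$, while $K_1$ is trivial. For (ii) and (iii) the plan is to transfer the classical Jordan dichotomy from the associated tree $T_G$ to $G$, using Lemma \ref{punoemezzo} and Proposition \ref{ptre} as the bridge between the two graphs.

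The first step I would carry out is to prove the radius identity ${\rm rad}(G)={\rm rad}(T_G)$. For $u\in V_{T_G}$ one has $e_G(u)=e_{T_G}(u)\geqslant {\rm rad}(T_G)$ by Lemma \ref{punoemezzo}, with equality at the center of $T_G$; hence ${\rm rad}(G)\leqslant {\rm rad}(T_G)$. For a noncut-vertex $u$ of $G$ not lying in $V_{T_G}$, Proposition \ref{ptre} expresses $e_G(u)$ through the eccentricities of the one or two cut-vertices of the block containing $u$; since cut-vertices belong to $V_{T_G}$, this gives $e_G(u)\geqslant {\rm rad}(T_G)+1$ when that block is a leaf-block and $e_G(u)\geqslant {\rm rad}(T_G)$ when it is a bridge-block. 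Thus ${\rm rad}(G)={\rm rad}(T_G)$, and as a by-product a central vertex of $G$ either lies in $V_{T_G}$ — in which case it belongs to $C(T_G)$ by Lemma \ref{punoemezzo}, so that $C(G)\cap V_{T_G}=C(T_G)$ — or it is a noncut-vertex of a bridge-block both of whose (distinct) cut-vertices lie in $C(T_G)$.

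Now I would invoke the Jordan dichotomy for $T_G$, legitimate since ${\rm diam}(T_G)={\rm diam}(G)$ by Proposition \ref{puno}. If ${\rm diam}(G)$ is even, $C(T_G)=\{c\}$ is a single vertex; a bridge-block has two distinct cut-vertices, so it cannot have both of them in $C(T_G)$, whence $C(G)=\{c\}$, proving (ii). If ${\rm diam}(G)$ is odd — hence at least $3$ — then $C(T_G)=\{c_1,c_2\}$ with $c_1c_2\in E_{T_G}$, and this edge corresponds to a block $B$ of $G$. I would then rule out that $B$ is a leaf-block: otherwise one of $c_1,c_2$, say $c_1$, would be its noncut-vertex, and Proposition \ref{ptre}(i) would force $e_G(c_1)=e_G(c_2)+1$, contradicting $e_G(c_1)=e_{T_G}(c_1)=e_{T_G}(c_2)=e_G(c_2)$. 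Hence $B$ is a bridge-block with cut-vertex set $\{c_1,c_2\}$, and it is the unique such block because two distinct blocks of a clique tree meet in at most one vertex. Combining this with the previous paragraph, $C(G)=\{c_1,c_2\}\cup\{\text{noncut-vertices of }B\}=V_B$, which is exactly (iii).

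The only delicate point is the radius identity of the second step, specifically verifying that every vertex outside $T_G$ (the noncut-vertices of leaf- and bridge-blocks) has eccentricity at least ${\rm rad}(T_G)$ — this is precisely what Proposition \ref{ptre} delivers. The rest is bookkeeping, the only structural input being the fact, already used in Section 2, that distinct blocks of a clique tree share at most one cut-vertex.
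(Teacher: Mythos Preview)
Your proposal is correct and follows essentially the same route as the paper: both proofs use Lemma~\ref{punoemezzo} and Proposition~\ref{ptre} to transfer the Jordan dichotomy from $T_G$ to $G$, first pinning down $C(G)\cap V_{T_G}=C(T_G)$ and then classifying the remaining noncut-vertices via Proposition~\ref{ptre}. The only cosmetic differences are that you state the radius identity ${\rm rad}(G)={\rm rad}(T_G)$ explicitly (the paper only asserts that the radius is attained inside $T_G$) and that in~(iii) you rule out $B$ being a leaf-block by contradiction through Proposition~\ref{ptre}(i), whereas the paper simply observes that the two tree-centers, being non-leaves of $T_G$ when ${\rm diam}(T_G)\geqslant 3$, are automatically cut-vertices of $G$.
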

\begin{proof} Part (i) is trivially true. We now deal with Part (ii) and Part (iii) simultaneously. Let $G$ be a clique tree in $ \mathcal C \mathcal T^{\geqslant 2}$.
We start by claiming that ${\rm rad} (G)$
is attained by at least one vertex in $T_G$. This fact immediately comes from the two parts of Proposition~\ref{ptre}. Lemma~\ref{punoemezzo} ensures that
$e_G(v) = e_{T_G}(v)$ for every $v\in V_{T_G}$. This means that  $\mathcal C_G \cap C(G)$, i.e.\ the set of centers of $G$ which are also cut-vertices, is equal to $C(T_G)$. As already recalled, $C(T_G)$ is a singleton, say $\{z\}$, if ${\rm diam} (G)= {\rm diam} (T_G)$ is even, and consists of two adjacent vertices in $T_G$, say $z_1$ and $z_2$,  if $ {\rm diam} (G)= {\rm diam} (T_G)$ is odd (and larger than $1$).  In the latter case, by definition of $T_G$, the vertices $z_1$ and $z_2$ are the two cut-vertices of a special block of $G$, say $B_C$. The proof ends by showing that
\[ C(G) \cap (G \setminus \mathcal C_G) = \begin{cases} \varnothing \qquad \qquad \qquad \text{if $G \in\mathcal C \mathcal T^{\geqslant 2}_{ev}$},\\[.3em]
                                                                                   B_C \setminus \{z_1,z_2\}\quad  \, \text{if $G \in\mathcal C \mathcal T^{\geqslant 2}_{odd}$}.\\
\end{cases}
\]
Noncut-vertices of a leaf-block surely are not in $C(G)$ by Proposition~\ref{ptre}(i). Consider now a noncut-vertex $v$ belonging to a bridge-block $B$, and suppose $V_B \cap V_{T_G}=\{w_1, w_2\}$. By Proposition~\ref{ptre}(ii) we deduce that $v$ belongs to $C(G)$ if and only if
\[ e_G(v)= e_{G} (w_1)=e_{G} (w_2) \qquad    \text{and} \qquad \{w_1,w_2\} \subseteq \mathcal C_G \cap C(G) = C(T_G),\]
and this happens if and only if the diameter is odd and $B=B_C$.
\end{proof}
Let $G$ be a clique tree with odd diameter. Theorem \ref{3.3} makes reasonable to refer to the block of its centers as the {\em central block}.

The next theorem shows that,  for $G \in \mathcal C \mathcal T^{\geqslant 2}_{odd}$,  the rank of $\mathcal E(G)$ and the number of eigenvalues sharing the same sign are not affected by the order of the single blocks and how these are mutually disposed. Our result generalizes to clique trees in $\mathcal C \mathcal T^{\geqslant 2}_{odd}$  what Theorem 3.1 in \cite{MK} states for trees.

Throughout the rest of the paper, for $U\subseteq V_G$, we denote by $\mathcal E(G)_U$ the $\lvert U \rvert \times \lvert V_G \rvert$ submatrix of $\mathcal E(G)$ consisting of the rows indexed by the vertices in $U$. Moreover, we set $ \mathcal E(G)_u := \mathcal \mathcal E(G)_{\{u\}}$. With a slight abuse of notation we denote all the identity matrices, all the null matrices and all-ones matrices by $I$,  $O$ and $J$ respectively.  \textcolor{blue}{Their sizes} will be clear from the context in any of their occurrences.
\begin{thm}\label{3.5} Let $G$ be a clique tree of order $n$ in  $\mathcal C \mathcal T^{\geqslant 2}_{odd}$. Then, ${\rm rk} (\mathcal E(G))=4$. Moreover, $\mathcal E(G)$ has exactly two positive and two negative eigenvalues, that is, ${\rm In} (\mathcal E(G))=(2, 2, n-4)$.
\end{thm}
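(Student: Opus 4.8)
The plan is to reduce everything to the tree case by exploiting the structural results already established. First I would recall from Theorem~\ref{3.3}(iii) that, since $G \in \mathcal C \mathcal T^{\geqslant 2}_{odd}$, its associated tree $T_G$ also has odd diameter, so by the classical Jordan result $T_G$ has two adjacent centers. Applying Theorem~3.1 of \cite{MK} to the tree $T_G$ gives ${\rm rk}(\mathcal E(T_G))=4$ and ${\rm In}(\mathcal E(T_G))=(2,2,\lvert V_{T_G}\rvert - 4)$. By Proposition~\ref{pdue}, $\mathcal E(T_G)$ is a principal submatrix of $\mathcal E(G)$, so Cauchy interlacing (Theorem~\ref{2.1}) immediately yields $n_+(\mathcal E(G)) \geqslant 2$ and $n_-(\mathcal E(G)) \geqslant 2$, hence ${\rm rk}(\mathcal E(G)) \geqslant 4$. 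The whole content of the theorem is therefore the reverse inequality ${\rm rk}(\mathcal E(G)) \leqslant 4$.

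To get the upper bound on the rank, the idea is to show that every row of $\mathcal E(G)$ lies in the span of the rows indexed by $V_{T_G}$ — in fact, in the span of just four of them. Concretely, I would fix a noncut-vertex $u$ of $G$ lying in a block $B$ and compare the row $\mathcal E(G)_u$ with the rows $\mathcal E(G)_w$ for the cut-vertices $w$ of $B$. Using Proposition~\ref{ptre}: if $B$ is a leaf-block with cut-vertex $w$, then $e_G(u)=e_G(w)+1$, and for any other vertex $x \notin B$ one has $d_G(u,x)=d_G(w,x)+1$; a short case analysis on whether $\min\{e_G(u),e_G(x)\}$ equals $d_G(u,x)$ (equivalently whether $\min\{e_G(w),e_G(x)\}$ equals $d_G(w,x)$) shows that the nonzero pattern of $\mathcal E(G)_u$ outside $B$ coincides with that of $\mathcal E(G)_w$, and the entries of $\mathcal E(G)_u$ on $V_B\setminus\{w\}$ are forced. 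This should let me write $\mathcal E(G)_u$ as an explicit linear combination of $\mathcal E(G)_w$ and rows of the form $\mathcal E(G)_{w'}$ for cut-vertices $w'$ of $B$, plus possibly a correction supported on $V_B$ that is itself controlled. The bridge-block case uses Proposition~\ref{ptre}(ii): $e_G(u)=\max\{e_G(w_1),e_G(w_2)\}$, and one argues that $\mathcal E(G)_u$ is a combination of $\mathcal E(G)_{w_1}$ and $\mathcal E(G)_{w_2}$. Iterating, every row of $\mathcal E(G)$ is a linear combination of the rows indexed by $V_{T_G}$, giving ${\rm rk}(\mathcal E(G)) = {\rm rk}(\mathcal E(T_G)) = 4$.

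The delicate point — and the step I expect to be the main obstacle — is the bookkeeping inside a block: when $u$ and $w$ (or $w_1,w_2$) both lie in the same block $B$, their mutual eccentricity entries and their entries to the remaining vertices of $B$ need not vanish, and one must check that these "internal" contributions are consistent with the claimed linear dependence rather than adding new independent directions. This requires knowing precisely which entries $\epsilon_{xy}$ for $x,y$ in a common block are nonzero, i.e.\ when $d_G(x,y)=1=\min\{e_G(x),e_G(y)\}$, which forces at least one endpoint to have eccentricity $1$ — impossible once ${\rm diam}(G)\geqslant 3$ and the block is not the whole graph. So in fact all within-block eccentricity entries vanish in our setting, which should make the internal bookkeeping collapse and the linear-combination identities clean. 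Once ${\rm rk}(\mathcal E(G))=4$ is in hand, combining it with $n_+\geqslant 2$, $n_-\geqslant 2$ from interlacing forces $n_+ = n_- = 2$ and $n_0 = n-4$, completing the proof.
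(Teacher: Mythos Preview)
Your overall strategy --- bootstrap from the tree result of \cite{MK} via Proposition~\ref{pdue} and Cauchy interlacing for the lower bound, then show that every row of $\mathcal E(G)$ lies in the span of the rows indexed by $V_{T_G}$ for the upper bound --- is viable and genuinely different from the paper's approach. The paper does not invoke \cite{MK} at all: it partitions $V_G$ into five pieces $V_1,\dots,V_5$ according to distance from the two centers, reads off the block structure of $\mathcal E(G)$ directly, and exhibits an explicit invertible $4\times 4$ principal submatrix with inertia $(2,2,0)$.

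However, your leaf-block step contains a concrete error. The claim that ``the nonzero pattern of $\mathcal E(G)_u$ outside $B$ coincides with that of $\mathcal E(G)_w$'' is false. Take $G=P_4=v_1v_2v_3v_4$, the leaf-block $B=\{v_1,v_2\}$ with $u=v_1$, $w=v_2$, and $x=v_3$: then $\epsilon_{v_1v_3}=2\neq 0$ while $\epsilon_{v_2v_3}=0$. So $\mathcal E(G)_u$ cannot be written as a multiple of $\mathcal E(G)_w$ plus a correction supported on $V_B$. The right move is to compare $u$ not with the cut-vertex $w$ but with the designated pendant vertex $u'\in\mathcal S_G\cap B\subset V_{T_G}$: since $u$ and $u'$ are both noncut-vertices of the same block, they have identical distances to every other vertex and identical eccentricities, and your own observation that within-block entries vanish when ${\rm diam}(G)\geqslant 3$ then gives $\mathcal E(G)_u=\mathcal E(G)_{u'}$ on the nose.

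The bridge-block case also needs more than ``one argues''. The claim that $\mathcal E(G)_u$ is a combination of $\mathcal E(G)_{w_1}$ and $\mathcal E(G)_{w_2}$ is true, but the verification splits into two subcases. If $e_G(w_1)>e_G(w_2)$, one checks that both $\mathcal E(G)_u$ and $\mathcal E(G)_{w_1}$ vanish on vertices reached through $w_1$ (the bound $e_G(w_2)\geqslant 1+\max_{x\text{ on }w_1\text{-side}}d(w_1,x)$ is what makes this work) and agree on vertices reached through $w_2$, so in fact $\mathcal E(G)_u=\mathcal E(G)_{w_1}$. If $e_G(w_1)=e_G(w_2)$, then since $w_1,w_2$ are adjacent in the tree $T_G$ they must be its two centers, $B$ is the central block, and one finds $\mathcal E(G)_u=\mathcal E(G)_{z_1}+\mathcal E(G)_{z_2}$. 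With these repairs your route goes through and yields a somewhat more conceptual proof than the paper's explicit block computation.
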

\begin{proof} Since $G$ belongs to $\mathcal C \mathcal T^{\geqslant 2}_{odd}$, then ${\rm diam} (G) = 2k+1$ for a certain $k\in \N$; moreover, $n= \lvert V_G \rvert \geqslant 4$ and, by Proposition \ref{puno}, ${\rm diam} (T_G) = 2k+1$. Theorem \ref{3.3} ensures that $C(G)=B_C$, the block of $G$ including $C_{T_G}=\{z_1,z_2 \}$.  The graph $G'$ obtained from $G$ by removing all the edges of the central block $B_C$ has two connected components. We denote by $T_1$ (resp. $T_2$) the component of $G'$ containing $z_1$ (resp. $z_2$).

We set \\
\[ \begin{array}{lllll}
 V_{1}:=\{u\in V_{T_{1}}  \mid  d(u,z_{1})=k\}, & V_{2}:=\{v\in V_{T_{2}} \mid d(v,z_{2})=k\},\\[.4em]
 V_{3}:=\{u\in V_{T_{1}} \mid 0\leqslant d(u,z_{1})<k\}, &
 V_{4}:=\{v \in V_{T_{2}} \mid 0\leqslant d(v,z_{2})<k\},\ \textcolor{blue}{V_{5}:=V_{B_C} \setminus \{z_1,z_2\}}.
\end{array}
\]
 Clearly, $\{ V_i \mid 1 \leqslant i \leqslant 5\}$ is a partition
 of $V_G$. We just need the definition of $\mathcal E(G)$ and of the several $V_i$'s to realize that
\[ \begin{array}{lllllll}
\mathcal E(G)_{uv}=0, &  \text{if $\{u,v\} \subseteq V_{i}$ ($1 \leqslant i \leqslant 5$),} && \mathcal E(G)_{uv}=e(u) & \text{if $(u,v) \in V_{4} \times V_{1}$}, \\
\mathcal E(G)_{uv}=2k+1, & \text{if $(u,v) \in V_{1} \times V_{2}$}, && \mathcal E(G)_{uv}=k+1, & \text{if  $(u,v) \in V_{5} \times (V_{1} \cup V_{2})$},\\
\mathcal E(G)_{uv}=0, &  \text{if  $(u,v) \in (V_{1} \times V_{3}) \cup (V_{2} \times V_{4}) $}, &&
\mathcal E(G)_{uv}=0, &   \text{ if $(u,v) \in V_{5} \times (V_{3} \cup V_{4})$},\\
\mathcal E(G)_{uv}=e(v) & \text{if  $(u,v) \in V_{2} \times V_{3}$}.
\end{array}
\]
Consequently, the matrix $\mathcal E(G)$, which is  symmetric, can be partitioned in the following way:
\begin{equation}\label{Eq:matrix1}
\mathcal E(G)=\bordermatrix{%
       & V_1       & V_2          &V_3     &V_4          &V_5\cr
V_1    & O        &(2k+1)J       &O       &P            &(k+1)J\cr
V_2    &(2k+1)J    &   O          &Q       &O           &(k+1)J\cr
V_3    & O         &Q^{\top}            &O       &O            &O\cr
V_4    & P^{\top}        & O            &O       &O            &O\cr
V_5    & (k+1)J    & (k+1)J       &O       &O            &O\cr
},
\end{equation}
where $P$ (resp. $Q$) is a suitable $\lvert V_1 \rvert \times  \lvert V_4 \rvert $ (resp.  $\lvert V_2 \rvert \times  \lvert V_3 \rvert$) matrix. Note that all rows of $P$ (resp. $Q$) are identical and nowhere zero. As a consequence, the rows of $\mathcal E (G)$ corresponding to vertices in $V_1$ (resp. $V_2$) are all equal, and those corresponding to vertices in $V_3$ (resp. $V_4$) are proportional; thus, ${\rm rk} (\mathcal E(G)_{V_{i}})=1$ for $1 \leqslant i \leqslant 4$.  Note that each row  of the fifth block  in $\mathcal E(G)$ can be written as
\[ (k+1) \left( \frac{1}{e_G(u')} \mathcal E(G)_{u'} +  \frac{1}{e_G(u'')} \mathcal E(G)_{u''} \right), \]
where $u' $ is any vertex of $V_3$ and $u''$ any vertex of $V_4$. Thus, $ {\rm rk}( \mathcal E(G))\leqslant 4$. Actually, the rank of $\mathcal E(G)$ is $4$ \textcolor{blue}{since there exist a invertible matrix $A$ which is the principal submatrix of $\mathcal E(G)$}
\begin{equation*}
A=\ \bordermatrix{%
       & u_1       & u_2          &w     &z         \cr
u_1    & 0         &2k+1          &0       &k+1         \cr
u_2    &2k+1       & 0            &k+1     &0           \cr
w    & 0         &k+1           &0       &0           \cr
z    &k+1        & 0            &0       &0           \cr
},
\end{equation*}
 indexed by $\{u_1, u_2, w, z\}$, where  $u_{i}\in V_{i}$ for $i=1, 2$, $w\in V_{3}$ and  $z \in V_{4}$.

Now, a direct computation shows that ${\rm In}(A)=(2,2,0)$. More precisely,
\[{\rm Spec} (A) =\left\{ \frac{q+2k+1}{2},  \frac{q-2k-1}{2}, -\frac{q-2k-1}{2}, - \frac{q+2k+1}{2}  \right\}
, \qquad  \text{where $q= \sqrt{5+12k+8k^2}.$} \]  \textcolor{blue}{By Theorem 2.2}, we deduce that $\mathcal E(G)$ has at least two positive and two negative eigenvalues. But $ {\rm rk}(\mathcal E(G))=4$, hence the positive (resp. negative) eigenvalues of $\mathcal E(G)$ are exactly  two. In other words, $In(\mathcal E(G))=(2, 2, n-4)$.
\end{proof}
The only graphs with odd diameter in $\mathcal C \mathcal T$ left out of Theorem \ref{3.5} are the complete graphs of order $n \geqslant 2$, and it is well-known that ${\rm In}(\mathcal E(K_n))=(1,n-1,0)$ for $n \geqslant 2$, since $\mathcal E(K_n)$ is simply the adjacency matrix of $K_n$.

Before computing ${\rm In}(\mathcal E(G))$ for $ G \in \mathcal C \mathcal T^{\geqslant 2}_{ev}$ we need a definition and two lemmas.

\begin{Def}\label{3.1} Let $G$ be a clique tree in $\mathcal C \mathcal T^{\geqslant 2}_{ev}$, and let $z$ be the only center of $G$. A vertex $v$ adjacent to $z$ in $G$ is said to be `diametrally distinguished' if there is a diametral path containing the vertex $v$. The set of diametrally distinguished vertices will be denoted by $\mathcal D_G$.
\end{Def}

\begin{lem}\label{ldue} Let $G$ be a clique tree in $\mathcal C \mathcal T^{\geqslant 2}_{ev}$ with ${\rm diam}(G) \geqslant 4$. Then, $\mathcal D_G \subset \mathcal C_G$, i.e. the diametrally distinguished vertices of $G$ are also cut-vertices.
\end{lem}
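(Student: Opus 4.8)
The plan is to argue by contradiction, assuming there is a diametrally distinguished vertex $v$ that is \emph{not} a cut-vertex of $G$. Let $z$ be the unique center of $G$ (which exists by Theorem \ref{3.3}(ii) and is the center of $T_G$), and let $\mathrm{diam}(G)=2k$ with $k\geqslant 2$. By Definition \ref{3.1}, $v$ is adjacent to $z$ and lies on some diametral path $x_0 x_1 \cdots x_{2k}$ of $G$; since $G$ is in $\mathcal C\mathcal T^{\geqslant 2}_{ev}$ and $z$ is its only center, $z$ must be the midpoint $x_k$ of this path, so without loss of generality $v=x_{k-1}$ or $v=x_{k+1}$, say $v=x_{k-1}$.

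The key structural observation I would exploit is that the edge $zv$ lies in some block $B$ of $G$. If $v$ is not a cut-vertex, then $v$ is a noncut-vertex of $B$, and since $v$ lies on a diametral path through $z$, the vertex $z$ must be a cut-vertex of $B$ (otherwise the whole path would be trapped inside the clique $B$, forcing $\mathrm{diam}(G)\leqslant 2$). So $B$ is either a leaf-block (with $z$ its only cut-vertex) or a bridge-block. If $B$ is a leaf-block, then by Proposition \ref{ptre}(i), $e_G(v)=e_G(z)+1=\mathrm{rad}(G)+1=k+1$; but $v$ lies on a path of length $2k$ to $x_{2k}$, so $e_G(v)\geqslant d_G(v,x_{2k})=d_G(v,z)+d_G(z,x_{2k})=1+k$, which is consistent — so I need to push further. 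The real point: since $v$ is a noncut-vertex of the leaf-block $B$, every shortest path from $v$ leaving $B$ passes through $z$, hence $d_G(v,w)=d_G(z,w)+1$ for every $w\notin V_B$. A diametral path from $v$ would then have the form $v,z,\dots$ reaching some $w$ with $d_G(v,w)=2k$, forcing $d_G(z,w)=2k-1>k=\mathrm{rad}(G)$ once $k\geqslant 2$, contradicting $e_G(z)=\mathrm{rad}(G)$. The bridge-block case is handled similarly using Proposition \ref{ptre}(ii): if $B$ is a bridge-block with cut-vertices $z=w_1$ and $w_2$, then $e_G(v)=\max\{e_G(w_1),e_G(w_2)\}$, and a short argument comparing $e_G(w_2)$ with $\mathrm{rad}(G)+1$ (note $w_2$ is at distance $1$ from the center $z$, so $e_G(w_2)\leqslant \mathrm{rad}(G)+1=k+1$) again forces the diametral path through $v$ to produce a vertex too far from $z$.

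Concretely, I would structure the proof as: (1) fix $v\in\mathcal D_G\setminus\mathcal C_G$ and a diametral path $P$ through $v$, observe $z=x_k$ is the midpoint and $v$ is a neighbour of $z$ on $P$, say $v=x_{k-1}$; (2) identify the block $B\ni zv$ and note $z\in\mathcal C_G\cap V_B$ while $v\notin\mathcal C_G$, so $B$ is a leaf-block or bridge-block containing the noncut-vertex $v$; (3) in either case show that any vertex $w$ realizing $e_G(v)$ lies outside $B$ and satisfies $d_G(v,w)\leqslant d_G(z,w)+1\leqslant \mathrm{rad}(G)+1=k+1$, whereas $d_G(v,x_{2k})=2k$, giving $2k\leqslant k+1$, i.e. $k\leqslant 1$, contradicting $\mathrm{diam}(G)\geqslant 4$. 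The main obstacle I anticipate is being careful about the bridge-block case: when $v$ is a noncut-vertex of a bridge-block $B$ with cut-vertices $z$ and $w_2$, a shortest path from $v$ may leave $B$ through $w_2$ rather than through $z$, so I must check that even such a path cannot have length exceeding $k+1$ — this follows because $w_2\in N_G(z)$ forces $e_G(w_2)\leqslant e_G(z)+1$ by the triangle inequality, and then $d_G(v,w)\leqslant 1+e_G(w_2)\leqslant k+2$; tightening this to the needed bound requires noting that the path $v,w_2,\ldots,w$ through $w_2$, if of length $k+2$, would yield a path $z,w_2,\ldots,w$ of length $k+1$ and hence... here one must instead directly use that $d_G(v,w)=d_G(w_2,w)+1$ and $d_G(z,w)\leqslant d_G(w_2,w)+1$, so $d_G(v,w)\leqslant d_G(z,w)+1\leqslant k+1$, closing the gap. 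Once this bookkeeping is done, the contradiction $2k\leqslant k+1$ finishes the proof.
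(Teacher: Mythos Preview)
Your approach has a fatal arithmetic slip that collapses the whole strategy. In step~(3) you claim $d_G(v,x_{2k})=2k$, but since $v=x_{k-1}$ lies on the geodesic $x_0\cdots x_{2k}$, in fact $d_G(v,x_{2k})=2k-(k-1)=k+1$; indeed you compute this correctly a few lines earlier and note it is ``consistent''. So the bound $e_G(v)\leqslant k+1$ that you (correctly) extract from Proposition~\ref{ptre} is perfectly compatible with $d_G(v,x_{2k})=k+1$, and no contradiction arises. The other endpoint does not help either, since $d_G(v,x_0)=k-1<k+1$. The eccentricity-bound route simply cannot close: a vertex adjacent to the center \emph{can} have eccentricity $k+1$, and the diametral path provides no witness to anything larger.

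The paper's proof bypasses eccentricities entirely with a one-line structural observation: in any clique tree, every interior vertex of a shortest path is a cut-vertex. Two consecutive edges $x_{i-1}x_i$ and $x_ix_{i+1}$ of a geodesic cannot lie in the same block, because blocks are cliques and that would make $x_{i-1}$ adjacent to $x_{i+1}$, shortcutting the path. Since $k\geqslant 2$, the vertex $v=x_{k-1}$ is interior (both $x_{k-2}$ and $x_k=z$ exist), so $v\in\mathcal C_G$. Your argument can be repaired along exactly this line: if $v$ were a non-cut-vertex, it would lie in a single block $B$, so both incident path-edges $x_{k-2}v$ and $vz$ would lie in $B$, forcing $x_{k-2}$ adjacent to $z$ and contradicting that $x_0\cdots x_{2k}$ is a shortest path.
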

\begin{proof} It is sufficient to note, for every $G \in \mathcal C \mathcal T^{\geqslant 2}$ the non-pendant vertices of a diametral path are all cut-vertices.
\end{proof}

\begin{lem}\label{3.1}{\rm \cite[Lemma 3.1]{MK}} Let $A$ be a $2n\times2n$ symmetric matrix partitioned as
$$A= \begin{pmatrix} 2k(J-I)&(2k-1)(J-I) \\ (2k-1)(J-I)&O  \end{pmatrix}, \quad \text{ for $k \in \N$.}$$
Then, the inertia of $A$ is $(n, n, 0)$.
\end{lem}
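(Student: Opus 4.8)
Write $M := J - I$ for the common $n\times n$ building block of the four blocks of $A$, and set $a := 2k$, $b := 2k-1$, so that $A=\left(\begin{smallmatrix} aM & bM \\ bM & O\end{smallmatrix}\right)$. The key point is that every block of $A$ is a scalar multiple of the \emph{single} symmetric matrix $M$, so the four blocks are simultaneously diagonalizable. I would start by recording the spectral decomposition of $M$: it has eigenvalue $n-1$ on the line spanned by $\mathbf 1$ and eigenvalue $-1$ on $\mathbf 1^{\perp}$. Fix an orthonormal eigenbasis $v_1,\dots,v_n$ of $M$ with $Mv_i=\mu_i v_i$, so that $\mu_1=n-1$ and $\mu_2=\dots=\mu_n=-1$.

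The next step is to pass to the orthonormal basis $\{(v_i,0)^{\top},(0,v_i)^{\top}:1\le i\le n\}$ of $\R^{2n}$. A direct computation gives
\[ A\begin{pmatrix} v_i\\ 0\end{pmatrix}=a\mu_i\begin{pmatrix} v_i\\ 0\end{pmatrix}+b\mu_i\begin{pmatrix} 0\\ v_i\end{pmatrix},\qquad A\begin{pmatrix} 0\\ v_i\end{pmatrix}=b\mu_i\begin{pmatrix} v_i\\ 0\end{pmatrix}, \]
so each plane $\mathrm{span}\{(v_i,0)^{\top},(0,v_i)^{\top}\}$ is $A$-invariant and, in the reordered basis, $A$ is orthogonally similar to the block-diagonal matrix $\bigoplus_{i=1}^{n}B_i$ with $B_i=\left(\begin{smallmatrix} a\mu_i & b\mu_i\\ b\mu_i & 0\end{smallmatrix}\right)$. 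An orthogonal similarity preserves the spectrum, so ${\rm Spec}(A)$ is the union (counted with multiplicity) of the ${\rm Spec}(B_i)$, and hence ${\rm In}(A)=\sum_{i=1}^{n}{\rm In}(B_i)$.

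To conclude, observe that $\det B_i=-b^{2}\mu_i^{2}$. For $n\ge 2$ each $\mu_i\in\{\,n-1,\,-1\,\}$ is nonzero, and $b=2k-1\ne 0$ for every $k\in\N$; therefore $\det B_i<0$, so the symmetric $2\times 2$ matrix $B_i$ has exactly one positive and one negative eigenvalue, i.e.\ ${\rm In}(B_i)=(1,1,0)$. Summing over the $n$ blocks yields ${\rm In}(A)=(n,n,0)$, as claimed. (The degenerate case $n=1$, in which $M=O$ and $A=O$, does not arise in the applications of this lemma, where $n\ge 2$.)

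I do not anticipate a serious obstacle: the statement is essentially a bookkeeping exercise. The two points needing a little care are (i) verifying that the reordered orthonormal basis genuinely block-diagonalizes $A$, which is the short computation displayed above, and (ii) ensuring no zero eigenvalue creeps in, which is precisely where $b\ne 0$ and $\mu_i\ne 0$ enter. A basis-free variant is also available and worth keeping in mind: one checks that $A$ is nonsingular --- a routine determinant computation, e.g.\ using $\det\left(\begin{smallmatrix}X&Y\\Z&W\end{smallmatrix}\right)=\det(XW-YZ)$, valid here since $Z=bM$ commutes with $W=O$, which gives $\det A=\det(-b^{2}M^{2})\ne 0$ --- and then notes that $\{0\}\oplus\R^{n}$ is a totally isotropic subspace of dimension $n$ for the quadratic form $x\mapsto x^{\top}Ax$; since a nondegenerate symmetric form on $\R^{2n}$ has totally isotropic subspaces of dimension at most $\min(n_{+}(A),n_{-}(A))$ while $n_{+}(A)+n_{-}(A)=2n$, this forces $n_{+}(A)=n_{-}(A)=n$.
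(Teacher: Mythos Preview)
The paper does not prove this lemma at all: it is quoted verbatim from \cite[Lemma~3.1]{MK} and used as a black box in the proof of Theorem~\ref{3.6}. So there is nothing in the present paper to compare your argument against.

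That said, your proof is correct and self-contained. The simultaneous-diagonalization step is sound because every block of $A$ is a scalar multiple of the single symmetric matrix $M=J-I$, and the reduction to the $2\times 2$ blocks $B_i=\left(\begin{smallmatrix} a\mu_i & b\mu_i\\ b\mu_i & 0\end{smallmatrix}\right)$ is exactly right. The sign check $\det B_i=-b^{2}\mu_i^{2}<0$ pins down ${\rm In}(B_i)=(1,1,0)$, and summing gives the result. Your caveat about $n=1$ is accurate (then $M=O$ and the lemma fails as stated), and indeed in the paper the lemma is only invoked with $n=l=\lvert\mathcal D_G\rvert\geqslant 2$. The alternative isotropic-subspace argument you sketch at the end is also valid and is arguably the quickest route: nonsingularity plus an $n$-dimensional totally isotropic subspace forces $n_{+}=n_{-}=n$ immediately.
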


\begin{thm}\label{3.6} Let $G$ be a clique tree in $\mathcal C \mathcal T^{\geqslant 2}_{ev}$ with order $n$ and diameter $d=2k$ with $k \geqslant 2$, and let $C(G)=C(T_G)=\{z\}$.  Then, ${\rm rk} (\mathcal E(G))=2l$ and ${\rm In} (\mathcal E(G))=(l, l, n-2l),$ where $l:= \lvert \mathcal D_G\rvert$.
\end{thm}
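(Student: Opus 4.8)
The plan is to follow the blueprint of the proof of Theorem~\ref{3.5}: describe $\mathcal E(G)$ block-wise relative to a partition of $V_G$ governed by the unique centre $z$, read off that ${\rm rk}(\mathcal E(G))=2l$, exhibit an explicit $2l\times 2l$ principal submatrix whose inertia is already known from Lemma~\ref{3.1}, and conclude by Cauchy interlacing. First I would pin down the structure near $z$. Since ${\rm diam}(G)=2k\geqslant 4$, the centre $z$ is a cut-vertex (a tree of diameter $\geqslant 3$ has no central leaf), and by Lemma~\ref{ldue} the diametrally distinguished vertices $v_1,\dots,v_l$ are also cut-vertices. No two of them may lie in a common block with $z$, since that block would contain the three cut-vertices $z,v_i,v_j$, contradicting $G\in\mathcal C\mathcal T$; hence each $v_i$ belongs to its own bridge-block $B_i\ni z$ and determines a distinct component $\mathcal B_i$ of $G\setminus z$. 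By the definition of $\mathcal D_G$, every $\mathcal B_i$ contains vertices at distance exactly $k$ from $z$, and I write $U_i$ for the set of all such vertices.

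Using Lemma~\ref{punoemezzo}, Proposition~\ref{ptre} and an induction on $d_G(z,\cdot)$, I would first prove the eccentricity formula $e_G(x)=k+d_G(z,x)$ for every $x\in V_G$; in particular every component of $G\setminus z$ has depth at most $k$. Feeding this into the cut-vertex identity $d_G(u,v)=d_G(u,z)+d_G(z,v)$, valid when $u,v$ lie in distinct components of $G\setminus z$, together with the strict inequality $d_G(u,v)<d_G(u,z)+d_G(z,v)$ when $u,v$ lie in the same component (no shortest path between them may pass through the cut-vertex $z$), one obtains that $\mathcal E(G)_{uv}\neq 0$ exactly when $u$ and $v$ lie in different components of $G\setminus z$ (or one of them is $z$) and $\max\{d_G(z,u),d_G(z,v)\}=k$, in which case $\mathcal E(G)_{uv}=d_G(z,u)+d_G(z,v)$. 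Consequently each row indexed by a vertex of $U_i$ equals one and the same vector $f_i$, given by $(f_i)_v=k+d_G(z,v)$ for $v\notin\mathcal B_i$ and $(f_i)_v=0$ otherwise; each row indexed by a depth-$j$ vertex of $\mathcal B_i$ with $j<k$ equals $(k+j)g_i$, where $g_i:=\sum_{m\neq i}\mathbf 1_{U_m}$; and the row of $z$, as well as every row indexed in a component of depth $<k$, is a scalar multiple of $g_0:=\sum_m\mathbf 1_{U_m}$. Since $l\geqslant 2$ and $\sum_{i=1}^l g_i=(l-1)g_0$, the row space of $\mathcal E(G)$ is spanned by $\{f_1,\dots,f_l,g_1,\dots,g_l\}$, so ${\rm rk}(\mathcal E(G))\leqslant 2l$; and evaluating a vanishing linear combination of these $2l$ vectors at $z$, then at a depth-$1$ vertex of each $\mathcal B_i$, then at a vertex of each $U_m$, shows that they are linearly independent, whence ${\rm rk}(\mathcal E(G))=2l$.

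For the inertia I choose, for each $i$, a vertex $u_i\in U_i$ and a vertex $w_i$ at distance $k-1$ from $z$ lying on a shortest $z$--$u_i$ path (such a $w_i$ exists because $k\geqslant 2$ and $\mathcal B_i$ has depth $k$); the $2l$ vertices $u_1,\dots,u_l,w_1,\dots,w_l$ are pairwise distinct. By the entry description above, the principal submatrix $A$ of $\mathcal E(G)$ indexed by these vertices equals
\[ A=\begin{pmatrix} 2k(J-I)&(2k-1)(J-I)\\ (2k-1)(J-I)&O\end{pmatrix}, \]
with $I$ and $J$ of order $l$, so Lemma~\ref{3.1} (with $n=l$) gives ${\rm In}(A)=(l,l,0)$. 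Since $A$ is a principal submatrix of $\mathcal E(G)$, Theorem~\ref{2.1} forces $\mathcal E(G)$ to have at least $l$ positive and at least $l$ negative eigenvalues; as these numbers sum to ${\rm rk}(\mathcal E(G))=2l$, we get $n_+(\mathcal E(G))=n_-(\mathcal E(G))=l$, that is, ${\rm In}(\mathcal E(G))=(l,l,n-2l)$.

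The step I expect to demand the most care is the second one: transferring the tree-level intuition of \cite{MK} to clique trees, whose blocks are cliques rather than single edges. Concretely, this means establishing rigorously the eccentricity formula $e_G(x)=k+d_G(z,x)$, the vanishing of $\mathcal E(G)_{uv}$ whenever $u$ and $v$ lie in the same component of $G\setminus z$, and the fact that $z$ shares no block with any $v_i$. Once the entry pattern of $\mathcal E(G)$ is in hand, the submatrix computation and the interlacing argument run exactly as in the proof of Theorem~\ref{3.5}.
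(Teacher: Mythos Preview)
Your proposal is correct and follows essentially the same route as the paper: partition $V_G$ by the branches at the centre $z$, determine the entry pattern of $\mathcal E(G)$, bound the rank by $2l$, exhibit the $2l\times 2l$ principal submatrix of Lemma~\ref{3.1} form (indexed by one depth-$k$ and one depth-$(k{-}1)$ vertex in each branch), and conclude via Cauchy interlacing. The only minor organizational differences are that you partition via the components of $G\setminus z$ rather than by removing the edges of the bridge-blocks $B_i$, and you make the eccentricity formula $e_G(x)=k+d_G(z,x)$ and the linear independence of the $2l$ spanning rows explicit where the paper leaves these implicit.
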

\begin{proof}
Let $N_G(z) \cap \, \mathcal C_G = N_{T_G}(z) = \{ w_1, \dots, w_p\}$. \textcolor{blue}{Without loss of generality to assume that} $N_{T_G}(z) \cap \mathcal D_G= \{ w_i \mid 1 \leqslant i \leqslant l \}$. For $1 \leqslant i \leqslant l$, we denote by $B_i$ the bridge-block containing $z$ and $w_i$, and by $H_i$
the graph obtained from $G$ by removing all the edges of $B_i$. Let $T_i$ be the component
of $H_i$ containing $w_i$, and let $T_{l+1} := G \setminus \left( \bigcup_{i=1}^l T_i \right)$.
 It is useful to note that $e_{T_i}(w_i)=k-1$ for $1 \leqslant i \leqslant l$. The several $T_i$'s allow to obtain a partition \textcolor{blue}{$\mathcal V =\{ V_j \mid 1 \leqslant j \leqslant 2l+1 \}$ }of the set $V_G$
in the following way:

\[ \begin{array}{ @{}  l @{} }
      \;\;\; V_{i}=\{u\in T_{i} \mid d(u,z)=k\} \\[1.5\jot]
      V_{l+i}=\{u\in T_{i} \mid 0\leqslant d(u,w_{i})< k-1\}
    \end{array}
 \qquad \text{(for $1 \leqslant i \leqslant l$),} \quad \text{and} \quad V_{2l+1}=V_{T_{l+1}}
\]
(if needed, the reader can consult Example \ref{ex3} in Section 4).
Taking into account that the middle point of any diametral path is $G$ is necessarily $z$, and the fact that a vertex $  v$ belongs to $\bigcup_{i=1}^l V_i$ if and only if $v$ is both a noncut-vertex of a leaf-block and an endpoint on a diametral path, the following equalities, holding for $\{i,j\} \subseteq \{1,2,\dots, l\}$, are easy to check:
\[\label{myeq1} \begin{array}{lllll}
\mathcal E(G)_{uv}=0 &   \text{for $\{u,v\} \subseteq V_{i}$,} \\
\mathcal E(G)_{uv}=0&    \text{for $\{u,v\} \subseteq V_{l+i}$,}  && \mathcal E(G)_{uv}=2k &   \text{for $(u,v) \in V_{i} \times V_{j} \quad \;\,$ and $i\neq j$,}\\
\mathcal E(G)_{uv}=0 &   \text{for $\{u,v\} \subseteq V_{2l+1}$, } &&\mathcal E(G)_{uv}=e(v)   &\text{for $(u,v) \in V_{i} \times V_{l+j} \;\;\,$ and $i\neq j$,}\\
\mathcal E(G)_{uv}=0 & \text{for $(u,v) \in V_{i} \times V_{l+i}$,}&& \mathcal E(G)_{uv}=0 &\text{for $(u,v) \in V_{l+i} \times V_{l+j}$ and $i\neq j$,}\\
\mathcal E(G)_{uv}=0    & \text{for $(u,v) \in V_{l+i} \times V_{2l+1}$,}\\
\mathcal E(G)_{uv}=e(v) &  \text{for $(u,v) \in V_{i} \times V_{2l+1}$}\
\end{array}
\]
(each of the nine equalities above requires its own elementary argument). Therefore, the eccentricity matrix $\mathcal E(G)$ can be accordingly partitioned  as
\begin{equation}\label{Eq:matrixk}
\mathcal E(G)=\ \bbordermatrix{%
       & V_1       & V_2          &\cdots     &V_l          &V_{l+1}         &V_{l+2}        &\cdots          &V_{2l}         &V_{2l+1}\cr
V_1    & O         &(2k)J         &\cdots     &(2k)J        &O              &P_{1,l+2}      &\cdots          &P_{1,2l}       &P_{1,2l+1}\cr
V_2    &(2k)J      & O            &\cdots     &(2k)J        &P_{2,l+1}       &O             &\cdots          &P_{2,2l}       &P_{2,2l+1}\cr
\;\, \vdots &\vdots     &\vdots        &\ddots     &\vdots       &\vdots          &\vdots         &\ddots          &\vdots         &\vdots\cr
V_l    &(2k)J      &(2k)J         &\cdots     &O            &P_{l,l+1}       &P_{1,l+2}      &\cdots          &O              &P_{l,2l+1}\cr
V_{l+1}  & O         &P_{2,l+1}^{\top}  &\cdots     &P_{l,l+1}^{\top}   &O               &O              &\cdots          &O             &O\cr
V_{l+2}  &P_{1,l+2}^{\top} &O           &\cdots     &P_{l,l+2}^{\top}   &O               &O              &\cdots          &O             &O\cr
\;\, \vdots &\vdots     &\vdots        &\ddots     &\vdots       &\vdots          &\vdots         &\ddots          &\vdots         &\vdots\cr
V_{2l}   &P_{1,2l}^{\top}  &P_{2,2l}^{\top}  &\cdots     &O           &O               &O             &\cdots          &O              &O\cr
V_{2l+1} &P_{1,2l+1}^{\top}&P_{2,2l+l}^{\top} &\cdots     &P_{l,2l+1}^{\top}  &O             &O            &\cdots          &O            &O\cr
},
\end{equation}
where the rows of each matrix $P_{i,l+j}$ with $i\neq j$ and $(i,j) \in \mathcal S:=\{1, 2,\dots , l \} \times \{1, 2,\dots, l, l + 1\}$  are identical and nowhere zero. Moreover, all the nonzero entries of the column corresponding to a vertex $v \in V_{l+j}$ are the same, clearly implying that,
for a fixed $(i,j) \in \mathcal S$, the columns of the matrices $P_{i,l+j}$ are multiples of an all-ones-vector of appropriate size.

For $1 \leqslant i \leqslant l$, the rows corresponding to vertices in a fixed $V_{i}$ (resp. $V_{l+ i}$) are all the same (resp. all proportional); thus, ${\rm rk} (\mathcal E(G)_{V_{i}})={\rm rk} (\mathcal E(G)_{V_{l+i}})= 1$.

Since for each  $v\in V_{2l+1}$ we have
\[  \mathcal E(G)_v =   \sum_{i=1}^l  \left(\frac{e_G(v)}{(l-1)e_G(w_i)}  \mathcal E(G)_{w_i} \right) \]
with each $w_i$ arbitrarily chosen in $V_{l+i}$,
we have
 ${\rm rk} (\mathcal E(G))\leqslant 2l$. In order to see that ${\rm rk} (\mathcal E(G))= 2l$, \textcolor{blue}{we show a invertible matrix $M$ which is the principal submatrix of $\mathcal E(G)$ }indexed by the vertices $v_{1}, v_{2},\dots, v_{l}, z_{1},\dots, z_{l}$, where, for $1 \leqslant i \leqslant l$, $v_i$ is any vertex in $V_i$, whereas $z_i$ is the only cut-vertex of the leaf-block containing  $v_i$. By definition,
$z_i$ belongs to $V_{l+i}$ and $e_G(z_i)=2k-1$. That is why the matrix $M$ assumes the following form:
\begin{equation*}
M=\ \bordermatrix{%
       & v_1       & v_2          &\cdots     &v_l          &z_{1}           &z_{2}          &\cdots          &z_{l}           \cr
v_1    & 0         &2k            &\cdots     &2k           &0               &2k-1           &\cdots          &2k-1           \cr
v_2    &2k         & 0            &\cdots     &2k           &2k-1            &0              &\cdots          &2k-1           \cr
\,\;\vdots &\vdots &\vdots        &\ddots     &\vdots       &\vdots          &\vdots         &\ddots          &\vdots         \cr
v_l    &2k         &2k            &\cdots     &0            &2k-1            &2k-1           &\cdots          &0              \cr
z_{1}& 0           &2k-1          &\cdots     &2k-1         &0               &0              &\cdots          &0              \cr
z_{2}&2k-1         &0             &\cdots     &2k-1         &0               &0              &\cdots          &0              \cr
\,\;\vdots &\vdots     &\vdots        &\ddots     &\vdots       &\vdots          &\vdots         &\ddots          &\vdots         \cr
z_{l} &2k-1       &2k-1          &\cdots     &0            &0               &0              &\cdots          &0              \cr
},
\end{equation*}
Clearly, $M$ can be partitioned as
$$M= \begin{pmatrix} 2k(J-I)&(2k-1)(J-I) \\ (2k-1)(J-I)&O  \end{pmatrix},$$ and ${\rm rk}\,(M)=2l$ by Lemma \ref{3.1}.

The same lemma also ensures that ${\rm In} (M)=(l, l, 0)$. \textcolor{blue}{By Theorem 2.2}, $\mathcal E(G)$ has at least $l$ positive and $l$ negative eigenvalues. Since we know that  ${\rm rk}(\mathcal E(G)))=2l$, the matrix $\mathcal E(G)$ has exactly $l$ positive and $l$ negative eigenvalues or, equivalently, ${\rm In}(\mathcal E(G))=(l, l, n-2l)$ as claimed.
\end{proof}
\begin{re} {\rm The hypothesis $k\geqslant 2$ in the statement of Theorem \ref{3.6} cannot be removed. If ${\rm diam}(G)=2$ for a clique tree $G \in \mathcal C \mathcal T^{\geqslant 2}_{ev}$,  then $G$ is a clique star with order $n \geq 3$, and $l:=\lvert \mathcal D_G \rvert =n-1$. In this case, Theorem 3.4 in \cite{AKP} says that ${\rm rk}(\mathcal E(G))= t+1$ and ${\rm In} (\mathcal E(G))=(1,n-t-1 , t)$, where $t\geqslant 2$ is the number of the blocks of $G$. Besides, the $3$-tuple $(l,l,n-2l)$ is not nonnegative; hence, it cannot be the inertia  of any matrix.}
\end{re}

\section{$\mathcal E$-spectral syimmetry and clique trees in $\mathcal C \mathcal T$}
In this section, we detect the clique trees in $\mathcal C \mathcal T$ whose $\mathcal E$-eigenvalues are symmetric with respect to $0$. If this is the case, we simply say that the $\mathcal E$-spectrum is {\em symmetric}. We first focus on graphs with odd diameter.

\textcolor{blue}{
\begin{lem}\label{4.1}{\rm \cite[Lemma 4.2]{MK}} Let $p(\lambda) = \lambda^{n}+c_{1}\lambda^{n-1}+. . . +c_{n-1}\lambda+ c_{n}$ be a polynomial such that all of its roots are non-zero and real numbers. If $c_{i}$ and $c_{i+1}$ are different from zero for some $i\in\{1,\cdots, n-1\}$, then the roots of $p(\lambda)$ are not symmetric about the origin, that is, there exists a real number $\lambda_{0}$ such that $p(\lambda_{0})=0$ and $p(-\lambda_{0})\neq0$.
\end{lem}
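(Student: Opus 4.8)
The plan is to prove the contrapositive: if the roots of $p(\lambda)$ are symmetric with respect to the origin, then no two consecutive coefficients $c_i,c_{i+1}$ are simultaneously nonzero, which negates the hypothesis $c_i\neq 0\neq c_{i+1}$.

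First I would exploit the assumption that every root is nonzero. Since the multiset of roots is invariant under $r\mapsto -r$ and $0$ is not among them, the roots split into pairs $\{r_j,-r_j\}$ with $1\leqslant j\leqslant t$; in particular $n=2t$ is even and, $p$ being monic,
\[
p(\lambda)=\prod_{j=1}^{t}(\lambda-r_j)(\lambda+r_j)=\prod_{j=1}^{t}\bigl(\lambda^{2}-r_j^{2}\bigr)=q(\lambda^{2})
\]
for a monic polynomial $q$ of degree $t$. Hence $p$ is an even function, i.e.\ $p(-\lambda)=p(\lambda)$ identically.

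Next I would compare coefficients. Writing $p(-\lambda)=\sum_{k=0}^{n}(-1)^{n-k}c_k\lambda^{n-k}$ with $c_0=1$ and using that $n$ is even, the identity $p(-\lambda)=p(\lambda)$ forces $c_k=(-1)^{k}c_k$, so $c_k=0$ whenever $k$ is odd. As exactly one of any two consecutive indices $i,i+1$ is odd, at least one of $c_i,c_{i+1}$ vanishes, which is precisely the contrapositive sought. The reformulation stated in the lemma then follows immediately: were the roots symmetric, $p(-\lambda)=p(\lambda)$ for all $\lambda$, so no real $\lambda_0$ with $p(\lambda_0)=0$ and $p(-\lambda_0)\neq 0$ could exist; negating this yields the displayed conclusion.

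I do not expect a genuine obstacle here, the argument being short. The two points that need a little care are: (i) reading ``symmetric about the origin'' at the level of multiplicities, so that the factorization into the quadratic blocks $\lambda^{2}-r_j^{2}$ is legitimate; and (ii) invoking the nonvanishing of the roots to exclude the competing scenario in which $p$ is an \emph{odd} polynomial (which would arise if $0$ were a root of odd multiplicity). It is exactly this hypothesis that pins $p$ down as even and $n$ as even, after which the coefficient comparison is routine.
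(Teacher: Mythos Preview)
Your proof is correct. The paper itself does not prove this lemma; it is simply quoted from \cite[Lemma~4.2]{MK}, so there is no in-paper argument to compare against. Your contrapositive argument---pairing the nonzero roots as $\{r_j,-r_j\}$ to force $p(\lambda)=q(\lambda^2)$ and hence $c_k=0$ for all odd $k$---is clean and complete, and your remarks about reading symmetry at the level of multiplicities and using the nonzero-roots hypothesis to rule out the odd-polynomial case are exactly the points that need care.
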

}
\begin{thm}\label{4.1} Let $G$ be a clique tree in $\mathcal C \mathcal T$ with odd diameter. The $\mathcal E$-spectrum of $G$ is symmetric if and only if $\lvert C(G) \rvert =2$.
\end{thm}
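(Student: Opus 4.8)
The plan is to reduce the statement to the vanishing of a single coefficient of the characteristic polynomial of $\mathcal E(G)$, namely $E_3(\mathcal E(G))$ in the notation of Theorem~\ref{2.2}, and then to read off that coefficient from the block decomposition \eqref{Eq:matrix1}. First I would dispose of the complete graphs: if $G$ has odd diameter and is complete, then $\mathrm{diam}(G)=1$, so $\mathcal E(G)=A(G)$ has spectrum $\{(n-1)^{(1)},(-1)^{(n-1)}\}$, which is symmetric about the origin if and only if $n=2$, that is, if and only if $|C(G)|=2$ (recall $C(K_n)=V_G$). So from now on I may assume $G\in\mathcal C\mathcal T^{\geqslant 2}_{odd}$; write $\mathrm{diam}(G)=2k+1$ with $k\geqslant 1$, and let $B_C=C(G)$ be the central block, containing the two centers $z_1,z_2$ of $T_G$ (Theorem~\ref{3.3}(iii)).

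Next, using Theorem~\ref{3.5} (which gives $\mathrm{rk}(\mathcal E(G))=4$ and $\mathrm{In}(\mathcal E(G))=(2,2,n-4)$) together with Theorem~\ref{2.2}, I would write
\[
\varphi_{\mathcal E(G)}(\lambda)=\lambda^{n-4}\bigl(\lambda^4+E_2\lambda^2-E_3\lambda+E_4\bigr),\qquad E_j:=E_j(\mathcal E(G)),
\]
the higher terms dropping out because principal minors of size larger than the rank vanish, and $E_1=0$ because the diagonal of $\mathcal E(G)$ is zero. Since $\mathcal E(G)$ has exactly four nonzero real eigenvalues, the quartic factor $g(\lambda)=\lambda^4+E_2\lambda^2-E_3\lambda+E_4$ has four nonzero real roots and $g(0)=E_4\neq 0$. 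The $\mathcal E$-spectrum is symmetric about the origin precisely when the roots of $g$ are (the eigenvalue $0$ being its own opposite): if $E_3=0$ then $g$ is an even polynomial, so its roots occur in pairs $\{r,-r\}$; if $E_3\neq 0$ then $g$ has the two consecutive nonzero coefficients $-E_3$ and $E_4$, so \cite[Lemma~4.2]{MK} rules out symmetry. Hence the remaining task is to prove that $E_3(\mathcal E(G))=0$ if and only if $|C(G)|=2$.

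To compute $E_3(\mathcal E(G))$ I would exploit that $\mathcal E(G)$ is symmetric with zero diagonal, so that the principal minor on a triple $\{a,b,c\}$ equals $2\,\epsilon_{ab}\epsilon_{bc}\epsilon_{ca}$; this is nonzero exactly when $\{a,b,c\}$ induces a triangle of $\Gamma_G$. Reading the adjacencies off \eqref{Eq:matrix1} — no edges inside any $V_i$, while $V_3$ (resp.\ $V_4$) is joined in $\Gamma_G$ only to $V_2$ (resp.\ $V_1$) — a short case check shows that $(V_1,V_2,V_5)$ is the only triple of parts that is pairwise joined. Thus the triangles of $\Gamma_G$ are exactly the triples $\{u,v,w\}$ with $u\in V_1$, $v\in V_2$, $w\in V_5$, each contributing $2(2k+1)(k+1)^2$, so
\[
E_3(\mathcal E(G))=2(2k+1)(k+1)^2\,|V_1|\,|V_2|\,|V_5|.
\]
Since a diametral path of $G$ has length $2k+1$ and must run through the central block, one of its endpoints lies in $V_1$ and the other in $V_2$, whence $|V_1|,|V_2|\geqslant 1$; and $2(2k+1)(k+1)^2>0$. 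Therefore $E_3(\mathcal E(G))=0$ if and only if $V_5=\varnothing$, and since $V_5=V_{B_C}\setminus\{z_1,z_2\}$ with $B_C=C(G)$, this happens if and only if $|C(G)|=2$. Together with the previous step, this yields the theorem.

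The step I expect to be the main obstacle is the determination of $E_3(\mathcal E(G))$: one must use the full zero-pattern of \eqref{Eq:matrix1} — in particular the vanishing of the $V_1V_3$, $V_2V_4$, $V_3V_4$, $V_3V_5$ and $V_4V_5$ blocks — to be certain that no triangle of $\Gamma_G$ other than those of type $(V_1,V_2,V_5)$ is overlooked. A secondary point that deserves an explicit sentence is the nonemptiness of $V_1$ and $V_2$, which follows from Theorem~\ref{3.3} and the fact that every diametral path of $G$ passes through the central block $B_C$.
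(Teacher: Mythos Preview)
Your proof is correct and follows essentially the same approach as the paper's: both reduce the question to the value of $E_3(\mathcal E(G))$, read off the size-$3$ principal minors from the block decomposition \eqref{Eq:matrix1}, identify the $(V_1,V_2,V_5)$-triples as the only nonzero contributors, and invoke \cite[Lemma~4.2]{MK} when $E_3\neq 0$. The only difference is organisational: the paper splits into cases and, when $V_5=\varnothing$, proves symmetry by writing $\mathcal E(G)=\left(\begin{smallmatrix}O&C\\C^\top&O\end{smallmatrix}\right)$ and using the eigenvector flip $\binom{\mathbf y}{\mathbf z}\mapsto\binom{\mathbf y}{-\mathbf z}$, whereas you treat both directions uniformly via the vanishing of $E_3$.
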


\begin{proof} In the set of complete graphs with odd diameter, i.e. $\{K_n \mid n\geqslant 2\}$, the only graph whose $\mathcal E$-spectrum is symmetric is $K_2$. In fact, ${\rm Spec}_{\mathcal E}(K_2)=\{1,-1\}$ and ${\rm In}(\mathcal E(K_n))=(1,n-1,0)$ for all $n \geqslant 2$. This is consistent with our statement since $\lvert C(K_n) \rvert =n$.

Now, let $G$ be a fixed clique tree in $\mathcal C \mathcal T_{odd}^{\geqslant 2}$. By Theorem \ref{3.3}(iii) we know that $C(G)=B_C$, the block containing the two adjacent centers $z_1$ and $z_2$ of $T_G$. As in the proof of Theorem \ref{3.5}, we consider the graph $G'$ obtained from $G$ by removing all edges of the central block $B_C$, and denote by $T_1$ (resp. $T_2$) the component of $G'$ containing $z_1$ (resp. $z_2$). Four our purposes, it is convenient to shuffle the $V_i$'s  considered in the proof of Theorem \ref{3.5}. More explicitly, we set
\[ \begin{array}{lllll}
 W_1 := V_{1}=\{u\in V_{T_{1}}  \mid  d(u,z_{1})=k\}, & W_3:= V_{2}=\{v\in V_{T_{2}} \mid d(v,z_{2})=k\},\\[.4em]
 W_2:= V_{3}=\{u\in V_{T_{1}} \mid 0\leqslant d(u,z_{1})<k\}, &
 W_4:= V_{4}=\{v \in V_{T_{2}} \mid 0\leqslant d(v,z_{2})<k\},
\end{array}
\]
 and
 $W_5:=V_{5}=V_{B_C} \setminus \{z_1,z_2\}$.

Suppose now  $\lvert B_C \rvert =2$. In this case, $W_5$ is empty. Therefore,
the matrix $\mathcal E(G)$  can be written as\\
\begin{equation}\label{Eq:matrix10}
\mathcal E(G)=\ \bordermatrix{%
       & W_1       & W_2          &W_3           &W_4          \cr
W_1    & O         &O             &(2k+1)J       &P            \cr
W_2    & O        &O             &Q^{\top}             &O            \cr
W_3    &(2k+1)J    &Q            &O            &O           \cr
W_4    &P^{\top}         &O             &O             &O            \cr
},
\end{equation}
where the blocks $P$ \textcolor{blue}{and} $Q$ are precisely those appearing in \eqref{Eq:matrix1}. \textcolor{blue}{Partitioning the eccentricity matrix of $G$ with respect to the vertices of $W_{1}\bigcup W_{2}$ and $W_{3}\bigcup W_{4}$ gives us}\\
\[ \mathcal E(G)= \begin{pmatrix} O &  C\\ C^{\top} &O  \end{pmatrix},  \qquad \text{where} \quad C: = \begin{pmatrix} (2k+1)J & P \\ Q^{\top} & O \end{pmatrix}.
\]
Let now ${\bf x}$ be a $\lambda$-eigenvector of  $\mathcal E(G)$. If we write ${\bf x} = {{\bf y}\choose{\bf z}}$, where $\bf y$ consists of the first $\lvert W_1 \cup W_2\rvert$ components of ${\bf x}$, it is straightforward to check that $-\lambda$ also belongs to ${\rm Spec}_{\mathcal E}(G)$, the vector  ${{\;\bf y}\choose{-\bf z}}$ being one of its eigenvectors. The same argument shows that $\lambda$ and $-\lambda$ have the same multiplicity. Thus, ${\rm Spec}_{\mathcal E}(G)$ is symmetric, as claimed.

We now assume that $W_5$ is nonempty. Our goal is to show that ${\rm Spec}_{\mathcal E}(G)$ is not symmetric. This time,
$\mathcal E(G)$  can be written as
\begin{equation}\label{Eq:matrix11}
\mathcal E(G)=\ \bordermatrix{%
       & W_1       & W_2          &W_3           &W_4    &W_5      \cr
W_1    & O         &O             &(2k+1)J       &P        & (k+1)J    \cr
W_2    & O        &O             &Q^{\top}             &O   & O         \cr
W_3    &(2k+1)J    &Q            &O            &O     & (k+1)J      \cr
W_4    &P^{\top}         &O             &O             &O         & O   \cr
W_5 & (k+1)J & O & (k+1)J & O & O
}.
\end{equation}
By Theorem \ref{3.5}, The rank of $\mathcal E(G)$ is four (by Theorem \ref{3.5}) and its trace is $0$. By Theorem \ref{2.2}, the characteristic polynomial of $\mathcal E(G)$ assumes the form
\[ \det (\lambda I - \mathcal E(G)) = \lambda^{n-4} (\lambda^4 +a \lambda^2+b \lambda +c),\]
with $c\not=0$ and $b$ is the opposite of the sum of all the principal minors of size $3$.
We now prove that the $b$ is nonzero \textcolor{blue}{by Lemma 4.1}, implying that ${\rm Spec}_{\mathcal E}(G)$ is not symmetric.
By looking at \eqref{Eq:matrix11}, it is not hard to check
that the nonzero principal minors of size $3$ inside $\mathcal E(G)$ necessarily involve a first row indexed in $W_1$, a second row indexed in $W_3$, and a third row indexed in $W_5$. These nonzero minors are all equal to
\[ \det \begin{pmatrix} 0 & 2k+1 & k+1 \\ 2k+1 & 0 & k+1\\ k+1 & k+1 & 0 \end{pmatrix} = 2(2k+1)(k+1)^2. \]
That is why
\[ b= \textcolor{blue}{- \lvert W_1 \rvert \cdot  \lvert W_3 \rvert \cdot \lvert W_5}   \rvert \cdot 2(2k+1)(k+1)^2 \not=0\]
as wanted.
\end{proof}
\begin{cor} If $G$ is a clique tree in $\mathcal C \mathcal T^{\geqslant 2}_{odd}$ with $\lvert B_C\rvert=2$ and at least  $5$ vertices, then $G$ has exactly five pairwise distinct $\mathcal E$-eigenvalues.
\end{cor}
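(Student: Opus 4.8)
The plan is to assemble three facts already established. Since $G\in\mathcal C\mathcal T^{\geqslant2}_{odd}$, Theorem~\ref{3.5} gives ${\rm rk}(\mathcal E(G))=4$ and ${\rm In}(\mathcal E(G))=(2,2,n-4)$; as $n\geqslant5$, the eigenvalue $0$ genuinely occurs, with multiplicity $n-4\geqslant1$. Hence $\mathcal E(G)$ has \emph{at most} five distinct eigenvalues (the four nonzero ones together with $0$), and everything reduces to showing that the four nonzero $\mathcal E$-eigenvalues are pairwise distinct.

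The hypothesis $\lvert B_C\rvert=2$ means $\lvert C(G)\rvert=2$, since $C(G)=B_C$ by Theorem~\ref{3.3}(iii). Therefore Theorem~\ref{4.1} applies, and ${\rm Spec}_{\mathcal E}(G)$ is symmetric with respect to the origin: the four nonzero $\mathcal E$-eigenvalues are $\xi_1,\xi_2,-\xi_2,-\xi_1$ for suitable reals $\xi_1\geqslant\xi_2>0$, where $\xi_2>0$ because exactly two eigenvalues are positive. Thus the claim follows once the possibility $\xi_1=\xi_2$ is excluded.

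This last point is where positivity enters. By Proposition~\ref{2.5} the matrix $\mathcal E(G)$ is irreducible, and it is also symmetric and entrywise nonnegative, so the Perron--Frobenius theorem guarantees that its spectral radius $\rho(\mathcal E(G))$ --- which is precisely its largest eigenvalue $\xi_1$ --- is a \emph{simple} eigenvalue. Were $\xi_1=\xi_2$, this eigenvalue would have multiplicity at least two, a contradiction; hence $\xi_1>\xi_2$. Collecting everything,
\[ {\rm Spec}_{\mathcal E}(G)=\bigl\{\xi_1^{(1)},\ \xi_2^{(1)},\ 0^{(n-4)},\ (-\xi_2)^{(1)},\ (-\xi_1)^{(1)}\bigr\}, \]
which displays exactly five pairwise distinct values, as required.

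There is no real obstacle here; the only step needing care is the separation $\xi_1\neq\xi_2$, which the Perron--Frobenius argument settles in one line. If a self-contained alternative were wanted, one could instead observe that under $\lvert B_C\rvert=2$ the matrix $\mathcal E(G)$ has the block shape $\left(\begin{smallmatrix}O&C\\ C^{\top}&O\end{smallmatrix}\right)$ from \eqref{Eq:matrix10}, so its nonzero eigenvalues are $\pm$ the two nonzero singular values of $C$, and then check from the explicit structure of $C$ --- all $W_1$-rows equal, all $W_2$-rows proportional, and the block $(2k+1)J$ linking $W_1$ to $W_3$ nowhere zero --- that the $2\times2$ matrix representing $CC^{\top}$ on its two-dimensional column space is not scalar, so its two positive eigenvalues (the squares of those singular values) differ. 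The shorter Perron--Frobenius proof is the one I would record.
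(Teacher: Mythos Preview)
Your proof is correct and follows essentially the same route as the paper: combine Theorem~\ref{3.5} (inertia $(2,2,n-4)$, so $0$ appears when $n\geqslant5$) with Theorem~\ref{4.1} (symmetry of the nonzero eigenvalues under $|B_C|=2$) and then invoke Proposition~\ref{2.5} together with Perron--Frobenius to force $\xi_1>\xi_2$. The only cosmetic addition is your explicit use of Theorem~\ref{3.3}(iii) to pass from $|B_C|=2$ to $|C(G)|=2$, and the optional singular-value remark at the end, which the paper omits.
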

\begin{proof}   By Theorems \ref{3.5} and \ref{4.1}, $\mathcal E(G)$ has two positive and two negative eigenvalues, all appearing in the sequence of inequalities
\begin{equation}\label{kaz} \xi_1 \geqslant \xi_2 >0 > -\xi_2 \geqslant -\xi_1. \end{equation}
The number $0$ is also an $\mathcal E$-eigenvalues since $n-4>0$.
 Now, the first inequality (and consequently the last one) in \eqref{kaz} is strict, since by Proposition \ref{2.5}
$\mathcal E(G)$ is a nonnegative irreducible matrix, and the Perron-Frobenius theorem ensures that $\xi_{1}$ is simple. Therefore, the five eigenvalues occuring in \eqref{kaz} are pairwise distinct. This completes the proof.
\end{proof}

 Let $A$ be an $n\times n$ real symmetric matrix. Along the proof of Theorem \ref{4.1} we already used the following fact: if $A\not=O$, the spectrum of $A$ is symmetric with respect to the origin if and only if the characteristic polynomial of $A$ is of type $\lambda^{n-2t}(\lambda^2-a_1)\, \cdots \, (\lambda^2-a_t)$ for suitable nonzero real numbers $a_1,\dots, a_t$.

Let $G$ be a clique tree in $\mathcal C \mathcal T^{\geqslant 2}_{ev}$ with $n$ vertices. In the next \textcolor{blue}{theorem}, we show that the coefficients of (at least) two monomials of consecutive degrees in the characteristic polynomial of $\mathcal E(G)$ are nonzero. This implies that the $\mathcal E$-spectrum of $G$ is not symmetric.
\begin{thm}\label{4.5}  Let $G \in \mathcal C \mathcal T^{\geqslant 2}_{ev}$. The $\mathcal E$-spectrum of $G$ is not symmetric.
\end{thm}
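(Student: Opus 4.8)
The plan is to sidestep the fact that the inertia count $(l,l,n-2l)$ from Theorem~\ref{3.6} is by itself compatible with a symmetric spectrum, and instead use the entrywise nonnegativity of $\mathcal E(G)$ together with the elementary remark that the trace of an odd power of a matrix whose spectrum is symmetric about $0$ must vanish. Since $\mathcal E(G)$ is symmetric with zero diagonal, ${\rm tr}\,\mathcal E(G)=0$; if moreover ${\rm Spec}_{\mathcal E}(G)$ were symmetric about the origin, pairing each eigenvalue $\xi$ with $-\xi$ would also force ${\rm tr}\,\mathcal E(G)^3=\sum_i\xi_i^3=0$. So the whole problem reduces to proving ${\rm tr}\,\mathcal E(G)^3>0$. (Equivalently, in the language of the characteristic polynomial used elsewhere in the paper: writing $\det(\lambda I-\mathcal E(G))=\lambda^{n_0}\bigl(\lambda^{r}+c_1\lambda^{r-1}+\cdots+c_r\bigr)$ with $c_r\neq0$, Newton's identities give $c_1=0$, $c_2=-\tfrac12{\rm tr}\,\mathcal E(G)^2\neq0$ because $\mathcal E(G)\neq O$, and $c_3=-\tfrac13{\rm tr}\,\mathcal E(G)^3$; hence $c_2,c_3$ are coefficients of two consecutive degrees, the first already nonzero, and it remains to see $c_3\neq0$, after which \cite[Lemma~4.2]{MK} applies to the quotient polynomial, all of whose roots are real and nonzero. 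I would present the direct ``odd power sum'' version as the main line and mention this reformulation only as a remark.)

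To get ${\rm tr}\,\mathcal E(G)^3=\sum_{u,v,w}\epsilon_{uv}\epsilon_{vw}\epsilon_{wu}>0$ it suffices to exhibit one \emph{nonzero triangle} in $\Gamma_G$, i.e.\ three vertices $u,v,w$ with $\epsilon_{uv},\epsilon_{vw},\epsilon_{wu}$ all nonzero: since every entry of $\mathcal E(G)$ is nonnegative, every summand above is nonnegative, and the two summands coming from such a triangle are strictly positive. The candidate triangle is $\{z,u_1,u_2\}$, where $z$ is the unique center of $G$ (Theorem~\ref{3.3}(ii)) and $u_1,u_2$ are the two endpoints of any diametral path; write ${\rm diam}(G)=2k$. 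Then $e_G(u_1)=e_G(u_2)=2k$, while $e_G(z)=e_{T_G}(z)=k$ by Lemma~\ref{punoemezzo}, Proposition~\ref{puno} (which gives ${\rm diam}(T_G)=2k$), and the classical fact that the center of a tree of diameter $2k$ has eccentricity $k$. Hence $d_G(z,u_i)\leqslant e_G(z)=k$ for $i=1,2$, and from $2k=d_G(u_1,u_2)\leqslant d_G(u_1,z)+d_G(z,u_2)\leqslant 2k$ we obtain $d_G(z,u_i)=k$. Therefore $\epsilon_{u_1u_2}=\min\{e_G(u_1),e_G(u_2)\}=2k$ and $\epsilon_{zu_1}=\epsilon_{zu_2}=\min\{k,2k\}=k$, so $\{z,u_1,u_2\}$ is indeed a nonzero triangle, $\bigl(\mathcal E(G)^3\bigr)_{zz}\geqslant 2\,\epsilon_{zu_1}\epsilon_{u_1u_2}\epsilon_{u_2z}=4k^3>0$, and (all other diagonal entries of $\mathcal E(G)^3$ being nonnegative) ${\rm tr}\,\mathcal E(G)^3>0$.

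A point worth stressing is that this argument is uniform over $\mathcal C\mathcal T^{\geqslant2}_{ev}$: it never uses $k\geqslant2$, so it simultaneously handles the clique stars with ${\rm diam}(G)=2$ — for which Theorem~\ref{3.6} says nothing — where $z$ is the coalescence vertex and $u_1,u_2$ are noncut vertices lying in two distinct blocks. I do not expect a genuine obstacle here; the only step requiring a little care is recording that $e_G(z)=k$ and deducing $d_G(z,u_i)=k$, which is the short chain of (in)equalities above. A secondary, purely bookkeeping subtlety arises only in the characteristic-polynomial reformulation, namely that \cite[Lemma~4.2]{MK} must be applied to $\lambda^{r}+c_1\lambda^{r-1}+\cdots+c_r$ and not to $\det(\lambda I-\mathcal E(G))$ itself; the direct formulation avoids it entirely.
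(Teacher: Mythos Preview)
Your argument is correct. The core observation---that the three vertices $z,u_1,u_2$ (the center together with the two ends of a diametral path) form a ``nonzero triangle'' in $\mathcal E(G)$---is exactly the one the paper uses: its matrix $A'$ is indexed by $v_1\in V_1$, $v_2\in V_2$, and $z$, which are precisely your $u_1,u_2,z$. The difference lies in the packaging. The paper first factors the characteristic polynomial as $\lambda^{n-2l}(\lambda^{2l}+c_1\lambda^{2l-1}+\cdots+c_{2l})$ using the rank computation of Theorem~\ref{3.6}, then argues via the sign of all size-$2$ and size-$3$ principal minors that $c_2$ and $c_3$ are both nonzero, and finally invokes \cite[Lemma~4.2]{MK} on the quotient polynomial; this forces the case ${\rm diam}(G)=2$ to be handled separately by citing \cite[Theorem~3.4]{AKP}. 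Your route---observe directly that a symmetric spectrum forces ${\rm tr}\,\mathcal E(G)^3=0$, then exhibit the single positive summand $2\epsilon_{zu_1}\epsilon_{u_1u_2}\epsilon_{u_2z}=4k^3$ among nonnegative terms---bypasses Theorem~\ref{3.6}, Lemma~4.1, and the separate $k=1$ case in one stroke. The paper's version has the minor expository advantage of making explicit which coefficients of the characteristic polynomial obstruct symmetry, but yours is shorter, more self-contained, and uniform over all of $\mathcal C\mathcal T^{\geqslant 2}_{ev}$.
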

\begin{proof} If ${\rm diam}(G)=2$, the matrix $\mathcal E(G)$ has just one positive eigenvalue and at least two negative eigenvalues  (see \cite[Theorem 3.4]{AKP}); thus ${\rm Spec}_{\mathcal E}(G)$ cannot be symmetric. From now on, we assume ${\rm diam} (G) \geqslant 4$. Let $C(G)=C(T_G)=\{z\}$.

Consider now the partition $\mathcal V$ of $V_G$ defined along the proof of Theorem \ref{3.6}. The matrix $\mathcal E (G)$ assumes the form \eqref{Eq:matrixk}, and its characteristic polynomial can be written as
$$\phi_{\mathcal E}(\lambda) = \lambda^{n-2l}(\lambda^{2l} + c_{1}\lambda^{2l-1} + \cdots + c_{2l-1}\lambda + c_{2l}),$$
with $c_{2l}\neq0$, since ${\rm rk} \, (\mathcal E(G))=2l$ by Theorem \ref{3.6}, and $c_1=0$ since the trace of $\mathcal E (G)$ is null.

In order to prove that the $\mathcal E$-spectrum of $G$ is not symmetric \textcolor{blue}{by Lemma 4.1}, we now show that $c_{2}$ and $c_{3}$ are both nonzero. As recalled in Theorem \ref{2.2}, the absolute value of
$c_{2}$ (resp. $c_3$) is equal to the absolute value of the sum $E_2$ (resp. $E_3$) of all principal minors of $\mathcal E(G)$ of size $2$ (resp. size $3$).
Now, since
\[ \det \begin{pmatrix} 0 & a \\ a & 0 \end{pmatrix} = -a^2 \qquad \text{and} \qquad  \det \begin{pmatrix} 0 & a & b \\ a & 0 & c \\ b & c & 0 \end{pmatrix} =2abc,\]
it follows that each principal minor of $\mathcal E(G)$ of size $2$ is nonpositive, and each principal minor of $\mathcal E(G)$ of size $3$ is nonnegative.
Consequently, $\lvert c_2 \rvert >0$ and $\lvert c_3 \rvert >0$ if and only if we find at least one nonzero principal minor of size $2$ and one nonzero principal minor of size $3$.
For $ i \in \{1,2\}$, let $v_i$ be a vertex in $V_i$. Since the principal submatrices
\[
A=\ \bordermatrix{%
       &v_1        & v_2              \cr
v_1    & 0         &2k            \cr
v_2    &2k         & 0            \cr
}, \qquad \text{and} \qquad
A'=\ \bordermatrix{%
       &v_1        & v_2          &z\cr
v_1    & 0         &2k            &k\cr
v_2    &2k         & 0            &k\cr
z    &k          &k             &0\cr
},
\]
have both a nonzero determinant, the proof is complete.
\end{proof}

Combining Theorems \ref{4.1} and \ref{4.5}, we realize that the only clique trees in $\mathcal C \mathcal T$ whose $\mathcal E$-spectrum is symmetric are those having exactly two central vertices.
\textcolor{blue}{
\begin{thm}\label{4.6} A clique tree $G$ in $\mathcal C \mathcal T$ is symmetric with respect to the origin if and only if $G$ has an odd diameter and exactly two adjacent central vertices.
\end{thm}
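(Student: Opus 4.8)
The plan is to deduce Theorem~\ref{4.6} from Theorem~\ref{4.1} (the odd-diameter case), Theorem~\ref{4.5} (the even-diameter case) and the description of the center $C(G)$ supplied by Theorem~\ref{3.3}, after splitting $\mathcal C \mathcal T$ according to the parity of the diameter; throughout I tacitly assume $\lvert V_G \rvert \geqslant 2$, so that every complete clique tree $K_n$ has diameter $1$ (hence odd). Here the left-hand side of the asserted equivalence is ``${\rm Spec}_{\mathcal E}(G)$ is symmetric about the origin'' and the right-hand side is ``${\rm diam}(G)$ is odd and $G$ has exactly two, adjacent, central vertices''.

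Suppose first that ${\rm diam}(G)$ is even. Then $G$ is not complete, so $G \in \mathcal C \mathcal T^{\geqslant 2}_{ev}$, and Theorem~\ref{4.5} says that ${\rm Spec}_{\mathcal E}(G)$ is not symmetric; on the other hand, by Theorem~\ref{3.3}(ii) such a $G$ has exactly one central vertex, so it certainly does not have two adjacent central vertices. Hence both sides of the claimed equivalence fail, and the biconditional is true for all clique trees of even diameter.

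Now suppose ${\rm diam}(G)$ is odd. By Theorem~\ref{4.1}, ${\rm Spec}_{\mathcal E}(G)$ is symmetric if and only if $\lvert C(G) \rvert = 2$, so it suffices to show that, for odd-diameter $G$, the equality $\lvert C(G) \rvert = 2$ is equivalent to $G$ having exactly two central vertices that are moreover adjacent. If $G = K_n$, then $C(G) = V_G$ by Theorem~\ref{3.3}(i), so $\lvert C(G) \rvert = 2$ forces $n = 2$, and the two vertices of $K_2$ are adjacent; conversely $K_2$ has exactly two adjacent centers. If instead $G \in \mathcal C \mathcal T^{\geqslant 2}_{odd}$, then by Theorem~\ref{3.3}(iii) $C(G)$ equals the central block $B_C$, a clique containing the two adjacent centers $z_1, z_2$ of $T_G$; thus $\lvert C(G) \rvert = \lvert B_C \rvert \geqslant 2$, with equality precisely when $B_C = \{z_1, z_2\}$, i.e.\ when $G$ has exactly two central vertices and they span the block $B_C \cong K_2$, hence are adjacent. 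Combining the two subcases with the even-diameter case completes the argument.

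Since every ingredient is already established, there is no real obstacle here; the only point deserving a little care is the passage from the numerical criterion $\lvert C(G) \rvert = 2$ of Theorem~\ref{4.1} to the geometric condition in the statement, and this is exactly what Theorem~\ref{3.3} provides (together with the elementary observation that a block on two vertices is a $K_2$, so that its two vertices are adjacent).
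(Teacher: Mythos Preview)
Your proposal is correct and follows essentially the same approach as the paper: the paper simply states that ``Combining Theorems~\ref{4.1} and~\ref{4.5}, we realize that the only clique trees in $\mathcal C \mathcal T$ whose $\mathcal E$-spectrum is symmetric are those having exactly two central vertices'', and then records Theorem~\ref{4.6} without a separate proof. Your write-up is in fact more careful than the paper's, since you spell out explicitly how Theorem~\ref{3.3} converts the numerical condition $\lvert C(G)\rvert=2$ into the geometric condition on adjacency, and you handle the complete-graph subcase separately.
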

}

\section{Examples}
\begin{figure}
\begin{center}
{\begin{tikzpicture}[vertex1_style/.style={circle,draw,minimum size=0.14 cm,inner sep=0pt, fill=black},vertex2_style/.style={circle,draw,minimum size=0.07 cm,inner sep=0pt, fill=black}, nonterminal/.style={
rectangle,
minimum size=2mm,
thin,
draw=black,
top color=white, 
bottom color=white!50!white!50, 
font=\itshape
}]
   \begin{scope}[xshift=8em]
         \node[nonterminal] at (6.5,1) {$G_2$};

 \node[vertex1_style, label=above:\small$v_1$] (z5) at  (1.5,0) {};
 \node[vertex1_style, label=north west:\small$v_2$] (z6) at  (2.6,0) {};
 \draw[thick] (z6)--(z5);
\node[vertex1_style, label=below:\small$v_3$] (z7) at  (2.25,-1) {};
\node[vertex1_style, label=below:\small$v_4$] (z8) at  (2.95,-1) {};
\node[vertex1_style, label=right:\small$v_5$] (z9) at  (3.35,-.8) {};
\node[vertex1_style, label=right:\small$v_{6}$] (z10) at  (3.7,-.3) {};
 \draw[thick] (z6)--(z7)--(z8)--(z6);
 \draw[thick] (z6)--(z9)--(z10)--(z6);
\node[vertex1_style, label=below:\small$\;v_{7}$] (z11) at  (3.4,.7) {};
\node[vertex1_style, label=above:\small$v_{11}$] (z12) at  (2.4,1.1) {};
\node[vertex1_style, label=above:\small$v_{9}$] (z14) at  (5,2.1) {};
\node[vertex1_style, label=above:\small$v_{10}$] (z15) at  (3.85,1.8) {};
\node[vertex1_style, label=south east:\small$v_{8}$] (z13) at  (4.65,1) {};
 \draw[thick] (z6)--(z11)--(z12);
 \draw[thick] (z11)--(z15)--(z14)--(z13)--(z11)--(z13);
 \draw[thick] (z13)--(z15);
 \draw[thick] (z11)--(z14);
\draw[thick] (z6)--(z12);
		
\end{scope}
   \begin{scope}[ xshift=-10em]
              \node[nonterminal] at (0,1) {$G_1$};

 \node[vertex1_style, label=above:\small$v_1$] (z5) at  (1.5,0) {};

 \node[vertex1_style, label=above:\small$v_2$] (z6) at  (2.6,0) {};
 \draw[thick] (z6)--(z5);
\node[vertex1_style, label=below:\small$v_3$] (z7) at  (2.25,-1) {};
\node[vertex1_style, label=below:\small$v_4$] (z8) at  (2.95,-1) {};
\node[vertex1_style, label=right:\small$v_5$] (z9) at  (3.35,-.8) {};
\node[vertex1_style, label=right:\small$v_{6}$] (z10) at  (3.7,-.3) {};
 \draw[thick] (z6)--(z7)--(z8)--(z6);
 \draw[thick] (z6)--(z9)--(z10)--(z6);
\node[vertex1_style, label=below:\small$\;v_{7}$] (z11) at  (3.4,.7) {};
\node[vertex1_style, label=above:\small$v_{11}$] (z12) at  (2.6,1.4) {};
\node[vertex1_style, label=above:\small$v_{9}$] (z14) at  (5,2.1) {};
\node[vertex1_style, label=above:\small$v_{10}$] (z15) at  (3.85,1.8) {};
\node[vertex1_style, label=south east:\small$v_{8}$] (z13) at  (4.65,1) {};
 \draw[thick] (z6)--(z11)--(z12);
 \draw[thick] (z11)--(z15)--(z14)--(z13)--(z11)--(z13);
 \draw[thick] (z13)--(z15);
 \draw[thick] (z11)--(z14);
\end{scope}

\end{tikzpicture} }
\end{center}
  \caption{ \label{forte}  \small The clique trees considered in Example 1 and Example 2}
  \end{figure}
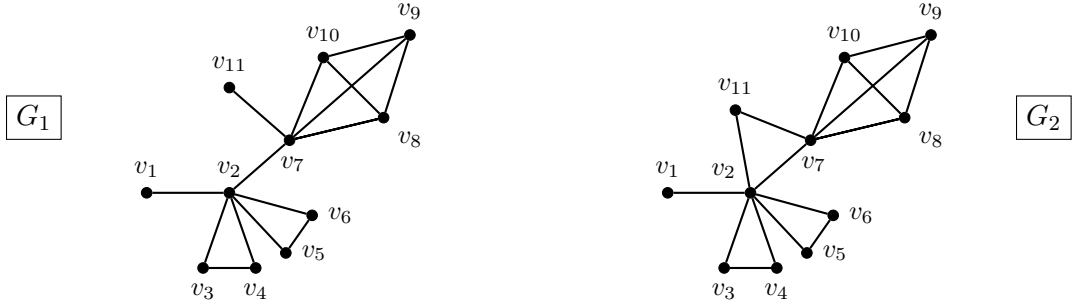
\begin{examplex}\label{ex1} {\rm Let $G_1$ be the graph depicted on the left in Fig. 2. The radius and the diameter of $G_1$ are $2$ and $3$ respectively. Taking into account the vertex-labelling of Fig. 2, $C(G_1)=C(T_{G_1}) = \{v_2, v_7 \}$. As in the proof of Theorem \ref{3.5}, we consider the graph $G'$ obtained from $G_1$ by removing all the edges of the central block $B_C$. In the case at hand $B_C$ just has one edge, namely $v_2v_7$. $T_1$, i.e.\ the component of $G'$ containing $v_2$, is the subgraph induced by $\{v_i \mid 1 \leqslant i \leqslant 6 \}$; whereas $T_2$, i.e.\ the component of $G'$ containing $v_7$ is the subgraph induced by $\{ v_i \mid 7 \leqslant i \leqslant 11\}$. The partition occuring along the proof of Theorem \ref{4.1}, specialized to the case $G=G_1$, is\\
 \[ \begin{array}{lllll}
 W_{1}:=\{u\in V_{T_{1}}  \mid  d(u,v_2)=1\} = V_{T_1} \setminus \{v_2\} & W_{3}:=\{v\in V_{T_{2}} \mid d(v,z_{2})=1\}=V_{T_2} \setminus\{v_7\},\\[.4em]
 W_{2}:=\{u\in V_{T_{1}} \mid 0\leqslant d(u,v_{2})<1\} = \{v_2\}, &
 W_{4}:=\{v \in V_{T_{2}} \mid 0\leqslant d(v,v_{7})<1\}=\{v_7\},
\end{array}
\]
 and
 $W_{5}=\varnothing$. The matrix $\mathcal E(G_1)$, with respect to the vertex ordering
\[ \underbrace{v_1, \, v_3, \,v_4, \,v_5,\, v_6}_{\in W_1}, \underbrace{\,  v_{2} \,}_{\in W_2}, \underbrace{v_8,\,v_9,\,v_{10},\,v_{11} \,}_{\in W_3},\underbrace{\, v_{7} \,}_{\in W_4},
\]
assumes a form of type \eqref{Eq:matrix10}, and can be entirely written down from the two rows
\[ \mathcal E(G_1)_{v_1} = (0,0,0,0,0,0,3,3,3,3,2) \qquad \text{and} \qquad  \mathcal E(G_1)_{v_2} = (0,0,0,0,0,0,2,2,2,2,0).
\]
The characteristic polynomial is $\lambda^7(\lambda^4-216\lambda^2+320)$, and
\[ {\rm Spec}_{\mathcal E} (G_1) = \left\{ 2\sqrt{27+\sqrt{709}},\,2\sqrt{27-\sqrt{709}},0^{(7)},\, -2\sqrt{27-\sqrt{709}},\,
-2\sqrt{27+\sqrt{709}} \right\}. \]
The results are consistent with Theorems \ref{3.5} and \ref{4.1}. In fact, ${\rm In}(\mathcal E(G_1))=(2,2,7)$ and the $\mathcal E$-spectrum of $G_1$ is symmetric.
}
\end{examplex}
\begin{examplex}\label{ex2} {\rm Let $G_2$ be the graph depicted on the right in Fig. 2. The radius and the diameter of $G_2$ are $2$ and $3$ respectively. Taking into account the vertex-labelling of Fig. 2, we have $C(G_2)=\{v_2,v_7,v_{11}\}$ and $ C(T_{G_2})=\{v_2,v_7\}$. As in the proof of Theorem \ref{3.5}, we consider the graph $G'$ obtained from $G_2$ by removing all the edges of the central block $B_C$. In the case at hand $B_C$ just has three edges, namely $v_2v_7$, $v_2v_{11}$ and $v_7v_{11}$. $T_1$, i.e.\ the component of $G'$ containing $v_2$, is the subgraph induced by $\{v_i \mid 1 \leqslant i \leqslant 6 \}$; whereas $T_2$, i.e.\ the component of $G'$ containing $v_7$ is the subgraph induced by $\{ v_i \mid 7 \leqslant i \leqslant 10\}$. The partition occuring along the proof of Theorem \ref{4.1}, specialized to the case $G=G_2$, is\\
 \[ \begin{array}{lllll}
 W_{1}:=\{u\in V_{T_{1}}  \mid  d(u,v_2)=1\} = V_{T_1} \setminus \{v_2\} & W_{3}:=\{v\in V_{T_{2}} \mid d(v,z_{2})=1\}=\{v_8,\,v_9,\,v_{10}\},\\[.4em]
 W_{2}:=\{u\in V_{T_{1}} \mid 0\leqslant d(u,v_{2})<1\} = \{v_2\}, &
 W_{4}:=\{v \in V_{T_{2}} \mid 0\leqslant d(v,v_{7})<1\}=\{v_7\},
\end{array}
\]
 and
 $W_{5}=\{v_{11}\}$. The matrix $\mathcal E(G_2)$, with respect to the vertex ordering
\[ \underbrace{v_1, \, v_3, \,v_4, \,v_5,\, v_6}_{\in W_1}, \underbrace{\,  v_{2} \,}_{\in W_2}, \underbrace{v_8,\,v_9,\,v_{10} \,}_{\in W_3},\underbrace{\, v_{7} \,}_{\in W_4},\underbrace{\, v_{11} \,}_{\in W_5}
\]
is of type \eqref{Eq:matrix11}, and can be entirely written down from the three rows
\[ \mathcal E(G_2)_{v_1} = (0,0,0,0,0,0,3,3,3,2,2) \qquad \text{and} \qquad  \mathcal E(G_2)_{v_2} = (0,0,0,0,0,0,2,2,2,0,0),
\]
and
\[ \mathcal E(G_2)_{v_8} = (3,3,3,3,3,2,0,0,0,0,2).
\]
The characteristic polynomial is $\lambda^7(\lambda^4-199\lambda^2-360\lambda+720)$, and
\[ {\rm Spec}_{\mathcal E} (G_2) = \left\{14.8323,\,1.2042, \,0^{(7)},\, -3.1211,\, -12.9155 \right\} \]
(the value of the nonzero eigenvalues is approximated).
The results are consistent with Theorems \ref{3.5} and \ref{4.1}. In fact, ${\rm In}(\mathcal E(G_2))=(2,2,7)$ and the $\mathcal E$-spectrum of $G_2$ is not symmetric.

}
\end{examplex}
\begin{examplex}\label{ex3} {\rm Let $G_3$ be the graph depicted on the left in Fig. 1. The radius and the diameter of $G_3$ are $2$ and $4$ respectively. Taking into account the vertex-labelling of Fig. 1, the only central vertex of $G_3$ is $v_6$. The set of diametrally distinguished vertices is $\mathcal D_{G_3}= \{v_5,v_{11}\}$. According to the definitions given along the proof of Theorem \ref{3.6}, the graphs $T_1$, $T_2$ are $T_3$ are the subgraph of $G_3$ respectively induced by $\{ v_i \mid 1 \leqslant i \leqslant 5\}$, $\{ v_i \mid 11 \leqslant i \leqslant 15\}$ and $\{ v_i \mid 6 \leqslant i \leqslant 10\}$. Moreover,
\[ \begin{array}{ @{}  ll @{} }
      V_{1}=\{u\in T_{1} \mid d(u,v_6)=2\} =\{ v_1,v_2,v_3,v_4\} & V_{2}=\{u\in T_{2} \mid d(u,v_6)=2\} =\{ v_{12},v_{13},v_{14},v_{15}\}  \\[1.5\jot]
      V_{3}=\{u\in T_{1} \mid 0\leqslant d(u,v_5)<1\}=\{v_5\} &  V_{4}=\{u\in T_{2} \mid 0\leqslant d(u,v_{11})<1\}=\{v_{11}\} \\[1.5\jot]
V_5=V_{T_3}=\{ v_i \mid 6 \leqslant i \leqslant 10\}.
    \end{array}
\]
The matrix $\mathcal E(G)$, chosen the vertex ordering
\[ \underbrace{v_1, \, v_2, \,v_3, \,v_4}_{\in V_1}, \underbrace{ v_{12}, \,v_{13}, \,v_{14}, \,v_{15}}_{\in V_2}, \underbrace{\, v_5 \,}_{\in V_3},\underbrace{\, v_{11} \,}_{\in V_4}, \underbrace{ v_{6}, \,v_{7}, \,v_{8}, \,v_{9},\, v_{10}}_{\in V_5},
\]
assumes the form \eqref{Eq:matrix1}, and can be easily written down from the following data: $k=2$ and the rows corresponding to the vertices $v_1 \in V_1$ and $v_{12} \in V_2$ are
\[(0,0,0,0,4,4,4,4,0,3,2,3,3,3,3)
\qquad
\text{and}
\qquad (4,4,4,4,0,0,0,0,3,0,2,3,3,3,3).\]
It turns out that the characteristic polynomial of $\mathcal E(G_3)$ is $(x-2)(x+18)(x^2-16x-356)$. Therefore,
\[ {\rm Spec}_{\mathcal E}(G_3)= \left\{8+2\sqrt{105}, \, 2, \, 0^{(11)}, \, 8-2\sqrt{105}, \, -18\right\}, \qquad \text{and} \qquad  {\rm In}(\mathcal E(G_3)=(2,2,11),\]
as expected after Theorem \ref{3.6}. Clearly, ${\rm Spec}_{\mathcal E}(G_3)$ is not symmetric, consistently with Theorem \ref{4.5}.
}
\end{examplex}

\section{Further Discussions}

In this paper we just consider the set $\mathcal C  \mathcal T$ of clique trees whose blocks have at most two cut-vertices of the clique tree. Otherwise, when the clique trees whose at least one block has more than three cut-vertices, then the inertia of eccentricity matrices no longer has the above characteristics. Let's use the example of Figure 3 to illustrate this point.\\\\
\unitlength 1mm 
\linethickness{0.9pt}
\ifx\plotpoint\undefined\newsavebox{\plotpoint}\fi 
\begin{picture}(101.923,26.04)(-25,0)
\put(.274,11.045){\makebox(0,0)[cc]{$v_{1}$}}
\put(.613,14.94){\circle*{1.5}}
\put(.457,15.091){\line(1,-1){5.303}}
\put(5.858,5.722){\makebox(0,0)[cc]{$v_{2}$}}
\put(6.197,9.617){\circle*{1.5}}
\put(17.157,20.831){\circle*{1.5}}
\put(18.856,22.32){\makebox(0,0)[cc]{$v_{5}$}}
\put(12.384,25.427){\circle*{1.5}}
\put(10.834,11.909){\makebox(0,0)[cc]{$v_{4}$}}
\put(11.323,14.912){\circle*{1.5}}
\put(22.377,11.732){\makebox(0,0)[cc]{$v_{6}$}}
\put(23.014,14.883){\circle*{1.5}}
\put(17.098,20.643){\line(-1,-1){11.125}}
\put(20.373,0){\makebox(0,0)[cc]{$H_{1}$}}
\put(37.834,14.969){\circle*{1.5}}
\put(39.954,11.623){\makebox(0,0)[cc]{$v_{9}$}}
\put(.805,14.982){\line(1,0){37.123}}
\multiput(12.119,25.412)(.0337360106,-.0337360106){524}{\line(0,-1){.0337360106}}
\multiput(23.079,14.982)(.033634504,.033634504){226}{\line(1,0){.033634504}}
\put(8.23,23.644){\makebox(0,0)[cc]{$v_{3}$}}
\put(30.327,21.877){\line(0,-1){14.142}}
\multiput(37.575,14.805)(-.033671751,-.033671751){210}{\line(-1,0){.033671751}}
\put(30.507,21.926){\circle*{1.5}}
\put(34.098,24.466){\makebox(0,0)[cc]{$v_{7}$}}
\put(30.359,7.737){\circle*{1.5}}
\put(34.418,6.596){\makebox(0,0)[cc]{$v_{8}$}}
\multiput(30.786,21.908)(.033714635,-.033714635){194}{\line(0,-1){.033714635}}
\put(84.011,19.344){\circle*{1.5}}
\put(98.186,19.487){\circle*{1.5}}
\put(81.536,14.494){\circle*{1.5}}
\put(71.411,11.869){\circle*{1.5}}
\multiput(71.423,11.756)(.13141026,.03365385){78}{\line(1,0){.13141026}}
\multiput(81.298,14.756)(-.033707865,-.045646067){178}{\line(0,-1){.045646067}}
\multiput(71.548,11.506)(.033613445,-.044117647){119}{\line(0,-1){.044117647}}
\put(75.411,6.494){\circle*{1.5}}
\multiput(83.923,19.256)(-.03365385,-.06089744){78}{\line(0,-1){.06089744}}
\multiput(98.173,19.756)(.03373016,.06349206){63}{\line(0,1){.06349206}}
\put(100.036,23.744){\circle*{1.5}}
\put(84.173,19.256){\line(1,0){14}}
\multiput(83.673,19.631)(.138655462,.033613445){119}{\line(1,0){.138655462}}
\put(94.036,14.994){\circle*{1.5}}
\put(91.786,24.119){\circle*{1.5}}
\put(91.798,23.881){\line(1,0){8.25}}
\put(76.661,17.869){\circle*{1.5}}
\multiput(76.923,17.881)(.14540816,.03316327){49}{\line(1,0){.14540816}}
\put(67.673,9.381){\makebox(0,0)[cc]{$v_{1}$}}
\put(71.548,4.881){\makebox(0,0)[cc]{$v_{2}$}}
\put(73.673,16.506){\makebox(0,0)[cc]{$v_{3}$}}
\put(81.548,10.506){\makebox(0,0)[cc]{$v_{4}$}}
\put(82.048,20.131){\makebox(0,0)[cc]{$v_{5}$}}
\put(91.798,12.131){\makebox(0,0)[cc]{$v_{7}$}}
\put(88.298,23.256){\makebox(0,0)[cc]{$v_{6}$}}
\put(101.923,21.506){\makebox(0,0)[cc]{$v_{8}$}}
\put(100.923,17.131){\makebox(0,0)[cc]{$v_{9}$}}
\put(86.107,.42){\makebox(0,0)[cc]{$H_{2}$}}
\multiput(81.516,14.723)(.125,.03358209){134}{\line(1,0){.125}}
\multiput(81.766,14.848)(.0701923077,.0336538462){260}{\line(1,0){.0701923077}}
\multiput(97.547,18.924)(-.033650943,-.033660377){106}{\line(0,-1){.033660377}}
\put(60.5,-8){\makebox(0,0)[cc]{Figure 3: $H_{1}$ ($H_{2}$) have a block which exist three (four) cut-vertices of the clique tree.}}
\end{picture}

\vspace{1.8cm}
Obviously, $H_{1}$ has one block indexed by $\{v_{4},v_{5},v_{6}\}$ which has three cut-vertices of the clique tree and $H_{2}$ has one block indexed by $\{v_{4},v_{5},v_{8},v_{9}\}$ which has three cut-vertices of the clique tree, then we can give their eccentricity matrices and eccentricity spectrum.

$$\mathcal E(H_{1})=
    \begin{pmatrix}
0   & 0      & 3       &0      &2      &2      &3      &3        &3     \\
0   & 0      & 3       &0      &2      &2      &3      &3        &3     \\
3   & 3      & 0       &2      &0      &2      &3      &3        &3      \\
0   & 0      & 2       &0      &0      &0      &2      &2        &2      \\
2   & 2      & 0       &0      &0      &0      &2      &2        &2      \\
2   & 2      & 2       &0      &0      &0      &0      &0        &0     \\
3   & 3      & 3       &2      &2      &0      &0      &0        &0     \\
3   & 3      & 3       &2      &2      &0      &0      &0        &0     \\
3   & 3      & 3       &2      &2      &0      &0      &0        &0
    \end{pmatrix}
 ,   \mathcal E(H_{2})=\begin{pmatrix}
0   & 0      & 3       &0      &2      &3      &3      &2        &2     \\
0   & 0      & 3       &0      &2      &3      &3      &2        &2     \\
3   & 3      & 0       &2      &0      &3      &3      &2        &2     \\
0   & 0      & 2       &0      &0      &2      &2      &0        &0     \\
2   & 2      & 0       &0      &0      &2      &2      &0        &0     \\
3   & 3      & 3       &2      &2      &0      &3      &0        &2     \\
3   & 3      & 3       &2      &2      &3      &0      &2        &0     \\
2   & 2      & 2       &0      &0      &0      &2      &0        &0     \\
2   & 2      & 2       &0      &0      &2      &0      &0        &0
    \end{pmatrix}
$$
It turns out that the characteristic polynomial of $\mathcal E(H_1)$ is $-x^3 (-1536 + 1728 x + 528 x^2 - 588 x^3 - 147 x^4 + x^6)$. Therefore,\\
\[ {\rm Spec}_{\mathcal E}(H_1)= \left\{-8.65993, \,-5.03793, \,-2.15288,\, 0^{(3)}, \,1.03969,\, 1.1515, \, 13.6595\right\}\]

And the characteristic polynomial of $\mathcal E(H_1)$ is $-(-1 + x)^2 x (4 + x)^2 (288 - 144 x - 106 x^2 - 6 x^3 + x^4)$. Therefore,\\
\[ {\rm Spec}_{\mathcal E}(H_1)= \left\{-6.17059, \,-4^{(2)}, \,-3.04056,\, 0, \,1^{(2)},\, 1.08678, \, 14.1244\right\}\]\\

{\footnotesize }


\begin{thebibliography}{12}
\bibitem{Band} H.-J. Bandelt and H. M. Mulder,  Distance-hereditary graphs,  J. Comb. Theory Ser. B, 41 (2) (1986), 182--208.

\bibitem{JB}
J. R. S. Blair and B. Peyton, An introduction to chordal graphs and clique trees, Graph Theory and Sparse Matrix Computation, IMA Vol. Math. Appl., 56 (1993), 1--29.

\bibitem{BH} A. E. Brouwer and W. H. Haemers, Spectra of Graphs, Springer Universitext, New York 2012.

\bibitem{D} R. Diestel, Graph Theory, Graduate Texts in Mathematics vol. 173, 5th edition, Springer-Verlag, Heidelberg (2017).
\bibitem{Har} F. Harary, Graph Theory, Addison-Wesley Publishing Company, Reading (Massachusetts) (1969).


\bibitem{RHC}
R. A. Horn and C. R. Johnson. Matrix analysis. Cambridge University Press, Cambridge, second edition, 2013.
\bibitem{How} E. Howorka,
On metric properties of certain clique graphs,
J. Comb. Theory Ser. B, 27 (1) (1979), 67--74.
\bibitem{JGZ}
 Y. L. Jin, R. Gu and X. D. Zhang, The distance energy of clique trees, Linear Algebra Appl., 615 (2021), 1--10.
\bibitem{KM} P. S. Kumar and C. E. V. Madhavan, Clique tree generalization and new subclasses of chordal graphs, Discrete Appl. Math.,  117 (1-3) (2002), 109--131.
\bibitem{LX}
X. Y. Lei, J. F. Wang and G. Z. Li, On the eigenvalues of eccentricity matrix of graphs, Discrete Appl. Math., 295 (2021), 134--147.
\bibitem{LLL}
H. Q. Lin, R. F. Liu and X. Y. Lu, The inertia and energy of the distance matrix of a connected graph, Linear Algebra Appl., 467 (2015), 29--39.


\bibitem{IM}
I. Mahato, R. Gurusamy, M. R. Kannan and S. Arockiaraj, Spectra of eccentricity matrices of graphs, Discrete Appl. Math., 285 (2020), 252--260.
\bibitem{MK}
I. Mahato and M. R.Kannan, On the eccentricity matrices of trees: Inertia and spectral symmetry, Discrete Math., 345 (2022)113067.


\bibitem{AKP}
A. K. Patel, L. Selvaganesh and S. K. Pandey, Energy and inertia of the eccentricity matrix of coalescence of graphs. Discrete Math., 344 (12) (2021)112591.
\bibitem{ZZ}
Z. P. Qiu and Z. K. Tang, On the eccentricity spectra of threshold graphs, Discrete Appl. Math., 310 (2022), 75--85.
\bibitem{MR}
M. Randi\'{c}, $D_{MAX}$-matrix of dominant distances in a graph, MATCH Commun. Math. Comput. Chem., 70 (2013), 221--238.
\bibitem{MRA}
M. Randi\'{c}, R. Orel and A. T. Balaban, $D_{MAX}-$matrix invariants as graph descriptors. Graphs having the same Balaban index $J$, MATCH Commun. Math. Comput. Chem., 70 (2013), 239--258.
\bibitem{JX}
J. F. Wang, X. Y. Lei, S. C. Li, W. Wei and X. B. Luo, On the eccentricity matrix of graphs and its applications to the boiling point of hydrocarbons, Chemometrics and Intelligent Laboratory Systems, 207 (2020), 104--173.
\bibitem{JG}
J. F. Wang, L. Lu, M. Randi\'{c} and G. Z. Li, Graph energy based on the eccentricity matrix, Discrete Math., 342 (9) (2019),  2636--2646.
\bibitem{JM}
J. F. Wang, M. Lu, F. Belardo and M. Randi\'{c}, The anti-adjacency matrix of a graph: Eccentricity matrix, Discrete Appl. Math., 251 (2018), 299--309.
\bibitem{WB} J. F. Wang, M. Lu, M. Brunetti, L. Lu and X. Huang, Spectral determinations and eccentricity matrix of graphs, Adv. Appl. Math. 139 (2022)102358.
\bibitem{JMF}
J. F. Wang, M. Lu, L. Lu and F. Belardo, Spectral properties of the eccentricity matrix of graphs, Discrete Appl. Math., 279 (2020), 168--177.




\bibitem{WW}
W. Wei, X. C. He and S.C Li, Solutions for two conjectures on the eigenvalues of the eccentricity matrix, and beyond, Discrete Math., 343 (8) (2020), 111-925.





\bibitem{XJ}
X. L. Zhang and J. J. Zhou, The Distance Laplacian Spectral Radius of Clique Trees, Discrete Dynamics in Nature and Society, 2020 (2020)8855987.

\end{thebibliography}
\end{document}